\DeclareFontFamily{U}{wncy}{}
\DeclareFontShape{U}{wncy}{m}{n}{<->wncyr10}{}
\DeclareSymbolFont{mcy}{U}{wncy}{m}{n}
\DeclareMathSymbol{\Sh}{\mathord}{mcy}{"58}
\numberwithin{equation}{section}
\numberwithin{figure}{section}
\newtheorem{thm}{Theorem}[section]
\newtheorem{cor}[thm]{Corollary}
\newtheorem{lem}[thm]{Lemma}
\newtheorem{prop}[thm]{Proposition}
\theoremstyle{definition}
\newtheorem{defn}[thm]{Definition}
\newtheorem{asmp}{Assumption}
\theoremstyle{remark}
\newcommand{\ds}{\displaystyle}
\newcommand{\R}{ \mathbb{R} }
\newcommand{\T}{ \mathbb{T} }
\newcommand{\cP}{ \mathcal{P} }
\newcommand{\cD}{ \mathcal{D} }
\newcommand{\cK}{ \mathcal{K} }
\newcommand{\Lc}{ \mathcal{L} }
\newcommand{\cM}{ \mathcal{M} }
\newcommand{\ga}{\gamma }
\renewcommand{\phi}{ \varphi }
\newcommand{\eps}{\varepsilon}
\newcommand{\de}{ \delta }
\newcommand{\al}{ \alpha }
\newcommand{\dist}{{\operatorname{dist}}}
\newcommand{\be}{\begin{equation}}
\newcommand{\ee}{\end{equation}}
\newcommand{\ben}{\begin{equation*}}
\newcommand{\een}{\end{equation*}}
\newcommand{\p}{ \partial}
\newcommand{\wt}{ \widetilde }
\newcommand{\lbl}{\label}
\newcommand{\non}{\nonumber}
\newcommand{\qu}{\quad}
\newcommand{\qmb}{\quad\mbox}
\newcommand{\Ga}{\Gamma}
\newcommand{\sm}{\setminus}
\newcommand{\ssk}{\smallskip}
\newcommand{\bsk}{\bigskip}
\DeclareMathOperator{\ord}{ord}
\title[]{Strict Lyapunov functions and energy decay in Hamiltonian chains with degenerate damping}
\author{Andrey Dymov}
\address{Steklov Mathematical Institute of Russian Academy of Sciences, Moscow 119991, Russia 
	\& National Research University Higher School of
	Economics, Moscow 119048, Russia
} 
\email{dymov@mi-ras.ru}
\author{Lev Lokutsievskiy}
\address{Steklov Mathematical Institute of Russian Academy of Sciences, Moscow 119991, Russia} 
\email{lion.lokut@gmail.com}
\author{Andrey Sarychev}
\address{Dipartimento di Matematica e Informatica U. Dini, Universit\`a di Firenze, Firenze 50134, Italia}
\email{andrey.sarychev@unifi.it}
\begin{document}

\begin{abstract}
	We consider a Hamiltonian chain of rotators (in general nonlinear) in which the first rotator is damped. Being motivated by problems of nonequilibrium statistical mechanics of crystals, 
	we construct a strict Lyapunov function that allows us to find a lower bound for the total energy dissipation rate when the energy and time are large. Our construction is explicit and its analysis is rather straightforward. We rely on a method going back to Matrosov, Malisoff and Mazenc, which we review in our paper. The method is rather universal and we show that it is  applicable to a chain of oscillators as well. 
\end{abstract}

\maketitle

\section{Introduction}

\subsection{Problem setting}

In this paper we consider a Hamiltonian chain of particles ~--- rotators or oscillators, in general nonlinear, and add dissipation to the first particle. More specifically, the chain of rotators is given by the Hamiltonian 
\be\lbl{H_rot}
H(p,q) = \sum_{j=1}^N \frac{p_j^2}{2} + \sum_{j=1}^{N-1} V_j(q_j-q_{j+1}),
\ee
where $N\ge 2,$ $p=(p_1,\dots, p_N)\in\R^N$ denote the moments of rotators and $q=(q_1,\dots, q_N)\in \T^N$ stand for their coordinates, each one living on the one-torus $\T\ni q_j$. The periodic potentials $V_j:\T\mapsto \R$ are assumed to be sufficiently smooth and satisfy non-degeneracy assumptions to be specified later. The Hamiltonian of the chain of oscillators is given by \eqref{H_osc}.
The equations of motion are 
\be
\lbl{eq_intro}\dot q_j = p_j, \qquad \dot p_j = -\p_{q_j} H - \de_{1j} p_1, 
\ee
where $\de_{1j}$ is the Kronecker symbol  and the dot stands for the time derivative.

Being motivated by problems of  nonequilibrium statistical mechanics (see Section~\ref{s:mot}),
we are   studying the total energy decay in this system when its energy is high. In the case of rotators, we  construct a suitable Lyapunov function for equation \eqref{eq_intro}, strict outside a compact set. Using it, we find a lower bound $C_N H^{-(2N-3)}$ for the energy dissipation rate  when the energy $H$ and time are sufficiently large.
 
More specifically,   our main result is Theorem~\ref{t:rotators}, in which we explicitly construct  a Lyapunov-type function $W=W(p,q)$, that behaves as $C_NH^{2N-1}$ when $H$ is large, satisfying
\[\dot W_t \le -C_{1,N} W_t^{1/(2N-1)} + C_{2,N}.\]
Here $W_t:=W(p,q)(t)$ and $(p,q)(t)$ is a solution to \eqref{H_rot}-\eqref{eq_intro},  while $C_N$, $C_{1,N}$ and $C_{2,N}$ are positive constants, depending on $N$.
Then, in Corollary~\ref{c:en_dec} we deduce that once the initial energy $H_0:=H(p,q)(0)$ is sufficiently large, the energy $H_t:=H(p,q)(t)$ satisfies
\be\lbl{en_dec_intro}
H_t \le H_0 - C_N\,t  H_0^{-(2N-3)}
\ee
 on time interval 
$H_0^{2N-\frac72}\ll t\ll H_0^{2N-2}$.
Proofs of Theorem~\ref{t:rotators} and Corollary~\ref{c:en_dec} are rather short and given in Section~\ref{s:rot} which can be read independently from the rest of the paper.

 In \cite{CEW} the authors gave numerical evidence that the lower bound \eqref{en_dec_intro} is optimal  in the sense that it is achieved at least for some potentials $V_j$ and initial conditions, but they studied it on shorter times $t\sim H_0^{(2N-3)/2}$; see below. This indicates that  the energy decay  in the chain of rotators can become extremely slow when the  energy $H$ is large. The reason for this phenomenon is that in the case when a $k$-th rotator has large energy but the energy of the $k-1$-st rotator is not so large, the argument $q_{k-1}-q_k$ of the interaction potential $V_{k-1}$ oscillates very fast. Then effective interaction of  these rotators almost vanishes due to an averaging-type effect, which leads to very slow energy transport through the chain. 

For the chain of oscillators we prove the existence of a strict Lyapunov function  for the high energy regime, but we do not find explicitly a lower bound for the energy decay rate. These computations can be  performed by analogy with those for the chain of rotators, and we hope to provide them elsewhere.

To get our result, we follow an approach going back to Matrosov, Malisoff and Mazenc, see \cite{MM}, allowing one under mild conditions to construct a strict Lyapunov function once a non-strict one is known (in our case the latter is the Hamiltonian $H$).
Roughly speaking, it suggests a way to construct functions $f_j(p,q)$ with $0\le j\le r$, $r\ge 1$,  satisfying the following properties.   The function $f_0$ has the form  $f_0=g(H)$ for a sufficiently large function $g$. The remaining functions $f_j$ are such that their Lie derivatives $Lf_j$ along the vector field from the r.h.s. of \eqref{eq_intro}  satisfy $L f_j|_{\{L f_{j-1}=0\}} \le 0$, outside the surface $\{L f_{j-1}=0\}$ the derivatives $L f_j$ are dominated by negative parts of the derivatives $L f_k$, $0\le k \le r$, and the surface $\{L f_j = 0\}$ has positive codimension in the surface $\{L f_{j-1}=0\}$.
Then the desired strict Lyapunov function is given by  $\sum_{j=0}^r  f_j$.

 Under additional restrictions the lower bound \eqref{en_dec_intro} was previously obtained in \cite{CEW}, but for times $t\sim H_0^{(2N-3)/2}$. 
 Namely, the authors considered  only a specific regime when the most part of the total energy is concentrated in a single rotator~\footnote{The authors obtained the lower bound $C_k H^{-(2k-3)}$ for the energy dissipation rate, where $k$ is a number of the "fast" rotator. In the case when $k=N$ this bound coincides with  ours.},
under the additional assumption that the potentials $V_j$ are trigonometric polynomials. 
  As we have mentioned above, they  presented numerical evidence of optimality of their lower bound.

In \cite{CEW} they used a completely different, technically  more complicated approach,  viewing the inverse momentum of the "fast" rotator as a small parameter   and using a KAM-like machinery. Namely, they performed a finite number of canonical transformations making explicit the discussed above decoupling of the "fast" rotator from its "slow" neighbours  and propagating the dissipation along the chain. Adaptation of this approach to the general case when the energy is distributed arbitrarily among the rotators  looks problematic, in particular  because of the resonances arising when both neighbouring rotators are  "fast".  On the other hand, in contrast to our method, approach used in \cite{CEW} allowed in \cite{CEP_3, CE_4} to establish the mixing property for chains of 3 and 4 rotators, coupled via their ends to thermal bath.  See a brief discussion in the next subsection and a more detailed one in Section~\ref{s:mixing}.

\subsection{Motivation: mixing in chains}
\lbl{s:mot}
Stability of Hamiltonian systems, sometimes with added dissipation, is a problem widely discussed in the literature, see e.g. excellent introduction to \cite{CEW} and references therein. 
However, our primary motivation comes from nonequilibrium statistical mechanics in which the chains of oscillators and rotators, coupled via the ends with heat baths, are classical models to study energy transport in crystals.
The heat baths are usually modelled by adding a dissipation (as in equation \eqref{eq_intro}) and stochastic perturbation to the equations of motion of the first and the last particles in the chain. 
In particular, one is interested in proving that the system approaches a stationary state when time goes to infinity, i.e. exhibits the {\it mixing} property.
Once the mixing property is established, the question of validity of the Fourier law arises ~--- a famous completely open problem \cite{Leb, LLP}. Despite many efforts by the community of mathematical physicists, some progress was achieved only in the presence of additional stochastic  perturbation (see e.g. \cite{BLL, LO, Dym16} and references therein) or assuming hyperbolicity of uncoupled dynamics \cite{DL, R}.

 For the moment of writing, the mixing property is proven only for the chains of rotators of length $N=3,4$ \cite{CEP_3, CE_4}. A source of difficulty here is in the already discussed extremely slow energy transport through the chain. In the case of oscillators the situation is much better but the problem is not  yet closed at all \cite{EPRB, EH,  RBT, C, HM, CEHRB}.

One of the most effective approaches to this problem consist in constructing a strict Lyapunov function for high energy regime, that controls the energy decay rate in the chain provided by dissipation at its ends; our paper is devoted to studying of this question. \footnote{Although we put dissipation only at one end of the chain, our results can be straightforwardly extended to the case when the dissipation acts at the both ends.} 
Unfortunately, the construction we present  gives a Lyapunov function only for the deterministic system but not for its white noise stochastic perturbation, used in the discussed above questions of nonequilibrium statistical mechanics.
It does not allow to prove mixing even for the chain of $3$ rotators coupled to thermal baths, in contrast to  the approach suggested in \cite{CEP_3, CE_4, CEW}.
However, our construction is simple, universal, requires only minimal assumptions and allows to get the rate of energy dissipation which seems to be optimal.
We believe that it is interesting by itself  and hope that its suitable modification could allow to make a progress in understanding of the mixing problem.

\subsection{Organization of the text} In Section~\ref{s:LF} we recall some basic facts from the Lyapunov function theory. In Proposition~\ref{p:strict_Lyap}  we present an adapted for our purpose construction from \cite{MM}, allowing to get a strict Lyapunov function (outside a compact set) once a non-strict one is given, under mild assumptions. This general construction applied to the system of rotators does not lead to the desired energy dissipation rate, but clarifies the result  from Section~\ref{s:rot}, obtained by its significant modification. There, in  Theorem~\ref{t:rotators} which is our main result, we find  a strict Lyapunov function for the system of rotators leading to the claimed estimate of the energy dissipation rate.  
Section~\ref{s:rot} can be read independently, but Section~\ref{s:LF} could help to understand the presented there construction. 
In Section~\ref{s:osc} we introduce the system of oscillators and check that is satisfies Assumption~\ref{asum1} from Section~\ref{s:LF}, crucial for our construction and allowing to apply Proposition~\ref{p:strict_Lyap} to get a strict Lyapunov function. In Section~\ref{s:mixing} we discuss in more details the problem of mixing in chains we concerned with in Section~\ref{s:mot}.

\bsk

\noindent {\bf Acknowledgments.} We are grateful to No\'e Cuneo and Sergei B. Kuksin for discussions.

\section{Lasalle's systems and strict Lyapunov functions}
\lbl{s:LF}

\subsection{Barbashin-Krasovskii Theorem, Lasalle's Invariance Principle}

We start by recalling a number of results on Lyapunov's direct method
of establishing asymptotic stability of equilibrium for a nonlinear system
via existence of Lyapunov function.

Consider a smooth $d$-dimensional manifold
\footnote{  In applications below $\cM$ will be chosen as a phase space of considered system, i.e. $\cM = \R^{2N}$ for the chain of $N$ oscillators and $\cM = \R^N\times \T^N$ in the case of rotators.}
 $\cM$ and a $C^{m}$-smooth function $F:\cM\mapsto\R$, $m\ge 1$.
Let $\hat x\in\cM$  be an equilibrium point for the system
\begin{equation}\label{eq_ODE}
	\dot x=F(x), \quad x\in \cM.
\end{equation}
Below we denote by $L_F W$ the Lie derivative of a function $W$ along the vector field $F$,
$$
L_F W:= \operatorname{grad} (W) \cdot F.
$$   
We recall that a function $W:\cM\mapsto\R$ is called {\it proper} if  sets
$\{x: \, W(x) \le c\}$ are compact for any $c\in\R$. 
\begin{defn}\lbl{d:LF}
	A  $C^1$-smooth function $W: \cM \to \mathbb{R}$ 
 is called {\it
	 strict
	Lyapunov function for the equilibrium $\hat x$} of \eqref{eq_ODE}, if i) $W$ is proper;  ii) $W(x)$ is
	positive  in $\cM \setminus \hat x$; iii) $W(\hat x)=0$, and
	\begin{equation}\label{dot_ineq}
		\mbox{iv}) \    L_FW(x)  < 0, \quad \forall x  \in \cM \setminus \hat x. 
	\end{equation}
\end{defn}

Classical A.M. Lyapunov's theorem claims   that if a strict Lyapunov function exists, then
$\hat x$ is asymptotically stable.

Assume now that $W(x)$ is a {\em non-strict}  Lyapunov function, i.e. it satisfies the non-strict version of   assumption \eqref{dot_ineq}
\begin{equation}\label{dot_leq}
	L_F W(x)  \leq 0, \quad \forall x  \in \cM .
\end{equation}
Asymptotic stability does not necessarily  hold in  this case,
unless  the set in which $L_FW(x)=0$ satisfies additional assumptions.
Most well-known criteria of asymptotic stability in this case are the two related results:
Barbashin-Krasovskii theorem
and  Lasalle's Invariance Principle.
\begin{prop}[Barbashin-Krasovskii-LaSalle theorem]\label{LaSalle}
	Let $\hat x$ be an equilibrium of system \eqref{eq_ODE} and
	$W: \cM \to \mathbb{R}$  be a non-strict Lyapunov function, which satisfies \eqref{dot_leq}.
	Let $\mathcal{P} \subset \cM$
	be a closed bounded neighbourhood of $\hat x$ which is positively invariant for \eqref{eq_ODE}.
	If there is no entire trajectory of the system, which is contained in
	$ \mathcal{P}  \setminus \hat x$ and on which $W$ is constant (the same, inequality  \eqref{dot_leq} turns equality),
	then $\hat x$ is asymptotically stable and $\mathcal{P} $ is contained in the basin of attraction
	of $\hat x$ for the system.
\end{prop}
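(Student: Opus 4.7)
The plan is to deduce attraction of every initial condition in $\mathcal{P}$ to $\hat x$ via a LaSalle-type argument on $\omega$-limit sets, and to verify Lyapunov stability separately.

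For Lyapunov stability I would use the standard sublevel-set argument. Given any open neighbourhood $U$ of $\hat x$, one chooses $\alpha > 0$ small enough that the connected component $V_\alpha$ of $\{x : W(x) \leq \alpha\}$ containing $\hat x$ is compact and sits inside $U$; this is possible by properness and continuity of $W$ together with $W(\hat x)=0<W(x)$ for $x\ne \hat x$. By \eqref{dot_leq}, any trajectory starting in $V_\alpha$ remains in $V_\alpha$, and hence in $U$. This yields Lyapunov stability of $\hat x$.

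For attraction, fix an arbitrary $x_0 \in \mathcal{P}$ and let $x(\cdot)$ denote the forward trajectory. By positive invariance, $x(t) \in \mathcal{P}$ for all $t \geq 0$, so the trajectory stays in a compact set and is defined for all positive times. The function $t \mapsto W(x(t))$ is non-increasing by \eqref{dot_leq} and bounded below by $0$, so it converges to some limit $c \geq 0$. Introduce the $\omega$-limit set $\Omega(x_0)\subset \mathcal{P}$; by the classical theory of flows on compact sets, $\Omega(x_0)$ is non-empty, compact and flow-invariant, and by continuity of $W$ together with $W(x(t))\to c$ one has $W\equiv c$ on $\Omega(x_0)$. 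Flow-invariance means that through every $y\in \Omega(x_0)$ there passes an \emph{entire} two-sided trajectory lying in $\Omega(x_0)\subset \mathcal{P}$, along which $W$ is constant. The hypothesis of the proposition then forbids such a trajectory to lie in $\mathcal{P}\setminus\hat x$, so necessarily $\Omega(x_0)=\{\hat x\}$ (equivalently $c=0$). Since $x(t)$ lies in a compact set and its $\omega$-limit set is a single point, a standard argument gives $x(t)\to \hat x$ as $t\to \infty$.

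The main technical point is the flow-invariance of $\Omega(x_0)$, together with the continuity argument forcing $W\equiv c$ on it. Once these are in place, the hypothesis on entire trajectories is what pins down $\Omega(x_0)=\{\hat x\}$: without that hypothesis one could only conclude convergence to the set $\{L_FW=0\}$ rather than to $\hat x$ itself. Combining Lyapunov stability with attraction from every point of $\mathcal{P}$ gives asymptotic stability of $\hat x$ and shows that $\mathcal{P}$ is contained in its basin of attraction.
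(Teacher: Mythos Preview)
Your argument is correct and is the standard LaSalle/$\omega$-limit-set proof. The paper does not supply its own proof of this proposition at all: it simply cites a textbook (``See e.g.\ \cite[Section~4.2]{Kh}''), treating the Barbashin--Krasovskii--LaSalle theorem as a known background result. So there is nothing to compare against; your write-up fills in exactly the kind of proof one finds in the cited reference.
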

See e.g. \cite[Section 4.2]{Kh}.
Note that if an entire trajectory $x(t)$ of  system \eqref{eq_ODE} is contained in
$ \mathcal{P}  \setminus \hat x$  and $W(x(t))$ is constant, then  there holds
\begin{equation}\label{lfkw}
	L_F^k W(x(t))=0, \ k=1, 2  \ldots
\end{equation}
at each point of the trajectory $x(t)$.
The following assumption would preclude the fulfilment of \eqref{lfkw}. 
Recall that the vector field $F$ is assumed to be $C^m$-smooth.

\begin{asmp}\label{asum0} There is  $1\le r \le m+1$ such that
the non-strict Lyapunov function $W$ is $C^r$-smooth  and
	\begin{equation}\label{eq_Lie_rk}
		\left(L_FW(x), L^2_FW(x), \ldots , L^r_FW(x) \right) \neq 0, \qquad  \forall x \in \cP\sm\hat x.
	\end{equation}
\end{asmp}

One  gets  an obvious corollary of Proposition \ref{LaSalle}.
\begin{cor}\label{Lie_rank} Let $W: \cM \to \mathbb{R}$  be a non-strict Lyapunov function for the equilibrium $\hat x$ of system \eqref{eq_ODE}.  
	Let $\cP\in\cM$ be a closed bounded neighbourhood of $\hat x$ which is positively invariant for the
	system.  Then, under Assumption~\ref{asum0}, the equilibrium $\hat x$ is asymptotically stable and $\cP$
	is contained in the basin of attraction of $\hat x$ for the system.
\end{cor}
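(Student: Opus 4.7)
The plan is to derive Corollary \ref{Lie_rank} directly from Proposition \ref{LaSalle} by verifying that Assumption \ref{asum0} rules out the existence of an entire trajectory in $\cP \setminus \hat x$ along which $W$ is constant. Concretely, I would argue by contradiction: suppose such a trajectory $x(t)$ exists, meaning $x(t) \in \cP \setminus \hat x$ for all $t \in \R$ and $W(x(t)) \equiv c$ for some constant $c$.

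From $W(x(t)) \equiv c$ one immediately gets $\frac{d}{dt}W(x(t)) = L_F W(x(t)) \equiv 0$. Since $W$ is assumed $C^r$-smooth and $F$ is $C^m$-smooth with $r \le m+1$, the iterated Lie derivatives $L_F^k W$ are well-defined smooth functions for $1 \le k \le r$, and differentiating $L_F W(x(t)) \equiv 0$ along the trajectory successively yields the chain in \eqref{lfkw}, namely $L_F^k W(x(t)) = 0$ for every $k \in \{1,\dots,r\}$ and every $t$. Picking any point $x_0 = x(t_0) \in \cP \setminus \hat x$, this gives $\bigl(L_F W(x_0), L_F^2 W(x_0), \dots, L_F^r W(x_0)\bigr) = 0$, in direct contradiction with \eqref{eq_Lie_rk}. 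Hence no such trajectory can exist, and the invariance principle (Proposition \ref{LaSalle}) immediately delivers asymptotic stability of $\hat x$ together with the inclusion of $\cP$ in its basin of attraction.

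There is no real obstacle: the only substantive step is the iteration of the time-derivative, which requires just enough regularity to justify differentiating $L_F^{k-1} W$ along $F$, and this is exactly why Assumption \ref{asum0} pairs the upper bound $r \le m+1$ with the $C^r$-smoothness of $W$. In writing up I would make explicit that no blow-up or escape issue needs to be addressed, because $\cP$ is bounded and positively invariant, so the trajectory $x(t)$ is automatically defined for all $t \ge 0$, which is sufficient for Proposition \ref{LaSalle}.
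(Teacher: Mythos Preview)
Your proposal is correct and takes essentially the same approach as the paper: the paper notes just before the corollary that any entire trajectory in $\cP\setminus\hat x$ on which $W$ is constant would satisfy \eqref{lfkw}, so Assumption~\ref{asum0} rules this out and Proposition~\ref{LaSalle} applies. Your write-up is simply a more detailed version of this argument, with the regularity bookkeeping made explicit.
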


\subsection{Strict Lyapunov Function for Lasalle's System}

Converse Lyapunov theorem claims that for each asymptotically  stable equilibrium
one can find a strict Lyapunov function \cite{Mas}.
In this section we discuss a "constructive version"\ of this result appropriate for our needs, which holds under a version of Assumption~\ref{asum0}. 
To this end we first extend  Definition \ref{d:LF} as follows.

\begin{defn}\lbl{LF_OC}
	{\it i)} A $C^1$-smooth function $W(x)$ is called {\it
		strict Lyapunov function for \eqref{eq_ODE} outside a compact set}, if  
		it is  proper and
		there is $Q\in \R$ such that 
	\begin{equation}\lbl{SLFoC}
	  L_FW(x)  < 0, \qquad \forall x  \in \cM \qmb{satisfying}\qu W(x) > Q.
	\end{equation} 
{\it ii)} It is called a {\it non-strict} Lyapunov function outside a compact set, if it is proper and for some $Q\in\R$ satisfies the non-strict version inequality \eqref{SLFoC}
\be\non
  L_FW(x)  \le 0, \qquad \forall x  \in \cM \qmb{satisfying}\qu W(x) > Q.
\ee
\end{defn}
Given a proper function $f$, we define  compact sets 
$$
\cK_Q^f := \{x\in\cM:\, f(x) \le Q\}, \qquad Q\in\R.
$$	
If in Definition~\ref{LF_OC} we wish to specify $Q$, we will write that $W$ is a strict or non-strict Lyapunov function outside the compact set $\cK_Q^W$.

For example, 

$\bullet$ a strict Lyapunov function for the equilibrium $\hat x$ of system \eqref{eq_ODE} is a  strict Lyapunov function  outside any compact set $\cK_Q^W$ with $Q\ge 0$;

$\bullet$ any proper function $W$ satisfying $L_F W(x) \le 0$ for all $x\in\cM$ is a  non-strict Lyapunov function  outside any compact set $\cK_Q^W$, $Q\in\R$.

\ssk

We have the following immediate result. 
\begin{lem}
	If $W$ is a strict Lyapunov function outside a compact set $\cK_Q^W$ then the  distance
	$$
	\dist(x(t), \cK_Q^W) \to 0 \qmb{as}\qu t\to\infty
	$$
	for any solution $x(t)$ to \eqref{eq_ODE}.
\end{lem}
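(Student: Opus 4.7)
The plan is a direct three-step argument.

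First, I would show the trajectory $x(t)$ stays in the compact sublevel set $K := \cK_M^W$ with $M := \max(W(x(0)), Q)$. The idea is that if $W(x(t^{\ast})) > M$ for some $t^{\ast} > 0$, then taking $t_0 := \sup\{s < t^{\ast} : W(x(s)) \le M\}$ one has $W(x(t_0)) = M$ by continuity and $W(x(s)) > M \ge Q$ on $(t_0, t^{\ast}]$, whence $\tfrac{d}{ds} W(x(s)) = L_F W(x(s)) < 0$ there; integration forces $W(x(t^{\ast})) < M$, a contradiction. Properness of $W$ then makes $K$ compact.

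Next I would introduce the excess $\phi(t) := \max(W(x(t)) - Q,\, 0) \ge 0$ and show $\phi(t) \to 0$ as $t \to \infty$. Applying the argument of the previous step with starting time shifted from $0$ to an arbitrary $t_1$ shows that if $W(x(t_1)) \le Q$ then $W(x(t)) \le Q$ for all $t \ge t_1$; combining this with the strict decrease of $W(x(t))$ on the open set $\{t : W(x(t))>Q\}$ gives that $\phi$ is non-increasing. Since $\phi \ge 0$, the limit $\phi_\infty := \lim_{t\to\infty} \phi(t)$ exists. If $\phi_\infty > 0$, then eventually $x(t) \in K \cap \{y : W(y) \ge Q + \phi_\infty\}$, a compact set on which continuity of $L_F W$ yields $L_F W \le -\delta$ for some $\delta > 0$; this would drive $W(x(t)) \to -\infty$, contradicting $W(x(t)) \ge Q$. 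Hence $\phi_\infty = 0$, i.e.\ $\limsup_{t\to\infty} W(x(t)) \le Q$.

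Finally, to convert this into $\dist(x(t), \cK_Q^W) \to 0$, I would fix $\eps > 0$ and consider $K_\eps := \{y \in K : \dist(y, \cK_Q^W) \ge \eps\}$, which is compact and disjoint from $\cK_Q^W$. Hence $W > Q$ on $K_\eps$, and by continuity $W \ge Q + \eta$ on $K_\eps$ for some $\eta = \eta(\eps) > 0$. The previous step then gives $W(x(t)) < Q + \eta$ for all $t$ sufficiently large, so $x(t) \notin K_\eps$ and therefore $\dist(x(t), \cK_Q^W) < \eps$, which yields the claim.

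There is no serious obstacle. The only mildly delicate point is the monotonicity of $\phi$, because Definition~\ref{LF_OC} imposes no sign constraint on $L_F W$ along the level $\{W = Q\}$ itself; however, the strict inequality on the open region $\{W > Q\}$, together with continuity of $W$ along trajectories, is enough to forbid any upward re-crossing through $Q$, which is all that is needed.
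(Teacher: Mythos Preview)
Your argument is correct and follows essentially the same route as the paper's proof: both establish that $W(x(t))$ is eventually monotone and bounded, then use compactness of a sublevel shell $\{Q+\phi_\infty \le W \le W(x(0))\}$ together with the strict negativity of $L_FW$ there to force $W(x(t))\to Q$. Your version is in fact more complete, since you spell out the final step converting $W(x(t))\to Q$ into $\dist(x(t),\cK_Q^W)\to 0$ via the compact set $K_\eps$, a point the paper leaves implicit.
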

{\it Proof.}
Since $L_F W<0$ outside $\cK_Q^W$, the latter set is  positively invariant. Thus, if $x(t)\in \cK_Q^W$ for some $t$ then we are done.
Otherwise, $W(x(t))$ monotonically decays to some $W_\infty\ge Q$, so that 
\be\lbl{x(t) comp}
Q\le W_\infty \le W(x(t)) \le W(x(0))  \qmb{for all $t\ge 0$}.
\ee 
Suppose that $W_\infty \ne Q$. Then $x(t)$ for all $t\ge 0$  belongs to a compact set which does not intersect with $\cK_Q^W$. Then, $\inf_{t\ge 0} L_F W (x(t))<0$ which contradicts \eqref{x(t) comp}.
\qed

\ssk

 Now let $Q\in\R$ and $W: \cM \to \mathbb{R}$ be a non-strict Lyapunov function outside a compact set $\cK_Q^W$ for system \eqref{eq_ODE}. 
 We will need the following version of Assumption~\ref{asum0}. 
	\begin{asmp}\label{asum1}  There exists  $1\le r \le m$ such that the Lyapunov function $W$ is $C^{r+1}$-smooth and satisfies relation \eqref{eq_Lie_rk} of Assumption~\ref{asum0} outside the compact set~$\cK_Q^W$,
		\begin{equation}\non
			\left(L_FW(x), L^2_FW(x), \ldots , L^r_FW(x) \right) \neq 0, \qquad  \forall x \notin \cK_Q^W.
		\end{equation}
	\end{asmp}

Under this assumption there exist smooth positive functions $\phi,\,\Phi:  (Q,\infty)\mapsto \R_{>0}$, for any $x\notin \cK_Q^W$ satisfying 
\begin{align}
	\lbl{asum_phi}	
	&  \left|L_FW(x)\right|+\sum_{k=2}^{r}\big|L^k_FW(x)\big|^2 \geq \varphi(W(x)), 
	\\\lbl{asum_Phi}
	&  |L^k_FW(x)| \leq  \Phi(W(x)) \quad \mbox{for all}\quad 1 \leq k \leq r+1,
\end{align}
and
\be\lbl{>1}
\phi\le\Phi^2.
\ee
Indeed, one can take 

\[\wt\varphi(w)=\inf\left\{\left.\big|L_FW(x)\big|+\sum_{k=2}^r\big|L_F^kW(x)\big|^2\ \quad \right| x\in\cM:\, W(x) = w \right\}\]
and
\[\left. \wt \Phi(w)=\sup\left\{\big|L^{k}_FW(x)\big| \quad \right|k=1, \ldots , r+1; \ x\in\cM:\, W(x) = w  \right\},   \]
and then minorize and majorize functions $\wt\phi$ and $\wt\Phi$ by smooth positive functions $\phi$ and $\Phi$ correspondingly, satisfying \eqref{>1}.

Below we fix a pair of functions $\phi$ and $\Phi$ satisfying \eqref{asum_phi}, \eqref{asum_Phi} and \eqref{>1}. 
\begin{prop}\label{p:strict_Lyap} Let $Q\in\R$ and $W: \cM \to \mathbb{R}$ be a non-strict Lyapunov function outside a compact set $\cK_Q^W$
	for system \eqref{eq_ODE}. Then, under
Assumption~\ref{asum1},
there exist smooth functions $A, B_k:(Q,\infty)\mapsto\R$, $k=2,\dots,r$, such that $B_k\ge 1$ and for any $\eps>0$
the function
\begin{equation}\label{eq_sLyap}
	W^\sharp(x):=A(W(x))-\sum_{k=2}^r B_k(W(x))L^{k-1}_FW(x)L^k_FW(x), \quad x\notin \cK^W_{Q+\eps},
\end{equation}
continued anyhow to a $C^1$-smooth function on $\cM$,	is a strict  Lyapunov function for \eqref{eq_ODE}  outside $\cK^W_{Q+\eps}$ 	(the  $C^1$-smooth continuation always exists).

The functions $B_k$ can be chosen in the form
\be\lbl{B_k_form}
B_k =  2^{(r-k)(r-k+1)}\big(\Phi^2 / \varphi\big)^{r-k}, \qquad k=2,\dots, r,
\ee
while the function $A$ can be taken arbitrary, growing at infinity so fast that  $W^\sharp$ is proper,    with derivative satisfying
	\be\lbl{asum_A}
A' > \Phi^2 \sum_{k=2}^r |B'_k|  + \Phi B_2 + 1.
	\ee
With  this choice,
\[
L_F W^\sharp(x) \le -\phi(W(x))/4, \qquad \forall x\notin \cK_{Q+\eps}^W.
\]
\end{prop}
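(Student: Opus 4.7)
Differentiating $W^\sharp$ along $F$ by the Leibniz rule and writing $\ell_k:=L_F^kW$, one finds
\[
L_F W^\sharp = A'(W)\ell_1 - \sum_{k=2}^r B_k'(W)\,\ell_1\ell_{k-1}\ell_k - \sum_{k=2}^r B_k(W)\,\ell_k^2 - \sum_{k=2}^r B_k(W)\,\ell_{k-1}\ell_{k+1}.
\]
Since $\cK_Q^W\subset\cK_{Q+\eps}^W$, on $\cM\setminus\cK_{Q+\eps}^W$ we still have $\ell_1\le 0$, so the first term equals $-A'|\ell_1|$. The plan is to absorb the second and fourth sums either into $A'|\ell_1|$ (via the growth hypothesis \eqref{asum_A}) or into the dissipative sum $-\sum B_k\ell_k^2$ (via weighted Young inequalities tuned to the explicit form \eqref{B_k_form}), and then use \eqref{asum_phi} to produce the $-\phi/4$ bound.

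\textbf{Key step: Young inequalities calibrated by $B_k$.} Using \eqref{asum_Phi}, the second sum is bounded by $\Phi^2\sum_k|B_k'|\,|\ell_1|$, and the $k=2$ term of the fourth sum by $B_2\Phi|\ell_1|$; both are to be absorbed into $A'|\ell_1|$. The boundary term $k=r$ is estimated by Young, using $|\ell_{r+1}|\le\Phi$, as $|B_r\ell_{r-1}\ell_{r+1}|\le \tfrac{B_{r-1}}{4}\ell_{r-1}^2 + B_r^2\Phi^2/B_{r-1}$; since $B_r=1$ and $B_{r-1}=4\Phi^2/\phi$, the constant remainder is exactly $\phi/4$. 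For interior indices $3\le k\le r-1$, the crucial algebraic identity
\[
4B_k^2 = B_{k-1}B_{k+1},
\]
verified by inspection of \eqref{B_k_form}, makes the Young weights telescope cleanly: one obtains $|B_k\ell_{k-1}\ell_{k+1}|\le \tfrac{B_{k-1}}{4}\ell_{k-1}^2 + \tfrac{B_{k+1}}{4}\ell_{k+1}^2$. Summing, the net coefficient of each $\ell_k^2$ in the resulting bound is at most $B_k/2$, so at least $\tfrac12 B_k\ell_k^2$ survives from the $-\sum B_k\ell_k^2$ contribution.

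\textbf{Conclusion and main obstacle.} Collecting these estimates,
\[
L_FW^\sharp \le -\bigl(A'-\Phi^2\textstyle\sum_k|B_k'|-B_2\Phi\bigr)|\ell_1| - \tfrac12\sum_{k=2}^r B_k\ell_k^2 + \tfrac{\phi}{4}.
\]
Hypothesis \eqref{asum_A} makes the coefficient of $|\ell_1|$ at most $-1$, while \eqref{>1} gives $B_k\ge 1$, so $L_FW^\sharp\le -\tfrac12(|\ell_1|+\sum\ell_k^2) + \phi/4$. By \eqref{asum_phi}, $|\ell_1|+\sum\ell_k^2\ge \phi(W)$, hence $L_FW^\sharp\le -\phi/4$ on $\cM\setminus\cK_{Q+\eps}^W$. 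Properness of $W^\sharp$ follows because $|W^\sharp-A(W)|\le \Phi^2\sum_kB_k$ is a function of $W$ alone, so any $A$ that dominates it at infinity (still compatible with \eqref{asum_A}) ensures $W^\sharp\to\infty$ as $W\to\infty$; a $C^1$-extension inside $\cK_{Q+\eps}^W$ exists by a routine cutoff construction and does not affect the inequality outside. The real content of the proof, and its main obstacle, lies in spotting the identity $4B_k^2=B_{k-1}B_{k+1}$: this is exactly what dictates the exponents in \eqref{B_k_form}, and without it the Young bounds would not telescope.
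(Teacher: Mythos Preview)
Your proposal is correct and follows essentially the same approach as the paper's own proof: compute $L_FW^\sharp$ by Leibniz, absorb the $B_k'$-terms and the $k=2$ cross-term into $A'|\ell_1|$ via \eqref{asum_A}, and handle the remaining cross-terms $B_k\ell_{k-1}\ell_{k+1}$ by weighted Young inequalities calibrated so that the identity $4B_k^2=B_{k-1}B_{k+1}$ makes the $\ell_j^2$ contributions telescope to at most $\tfrac12B_j$, with the boundary $k=r$ term producing the residual $\phi/4$. The only cosmetic difference is that the paper first writes the Young condition as an inequality $4B_k^2\le B_{k-1}B_{k+1}$ and then solves the recursion $B_{k-1}=4B_k^2/B_{k+1}$ from $B_r=1$, $B_{r-1}=4\Phi^2/\phi$ to \emph{derive} the formula \eqref{B_k_form}, whereas you take \eqref{B_k_form} as given and verify the identity directly.
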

This is a version of result  established in \cite[Th. 5.1]{MM}. 
We provide a proof which is more succinct than the original one.

\subsection{Proof of Proposition \ref{p:strict_Lyap}}

	 Differentiating  function \eqref{eq_sLyap} along the direction of the vector field $F$ we get
	\begin{eqnarray}
		L_FW^\sharp (x) = \label{LFW} \\
		\left[A'(W(x))-\sum_{k=2}^rB'_k(W(x))L_F^{k-1}W(x)L_F^{k}W(x)\right]L_FW(x)- \nonumber \\
		\sum_{k=2}^rB_k(W(x))\left(L_F^kW(x)\right)^2 -  \nonumber \\
		B_2(W(x))L_FW(x)L_F^{3}W(x) - \sum_{k=3}^{r}B_k(W(x))L_F^{k-1}W(x)L_F^{k+1}W(x)=  \nonumber \\
		A_1(x)  L_FW(x) -  \sum_{k=2}^rB_k(W(x))\left(L_F^kW(x)\right)^2 - \label{LFsharp2} \\
		\sum_{k=3}^{r}B_k(W(x))L_F^{k-1}W(x)L_F^{k+1}W(x),  \label{LFsharp3}
	\end{eqnarray}
	where
	\begin{equation}\non
		A_1(x)=
		\left[A'(W(x))-\sum_{k=2}^rB'_k(W(x))L_F^{k-1}W(x)L_F^{k}W(x)-  B_2(W(x))L_F^{3}W(x)\right]
	\end{equation}
	Since we suppose $B_k\ge 1$ all $k$,  the  sum in \eqref{LFsharp2}
	is non-negative.
	By \eqref{asum_Phi},
	$$
	A_1 \ge A'(W) - \sum_{k=2}^r |B'_k(W)| \Phi(W)^2 - B_2(W)\Phi(W).
	$$
	Choosing the function $A$ satisfying  \eqref{asum_A}, we get $A_1 \ge 1$. In particular, the term \eqref{LFsharp2} is non-positive, since $L_FW\leq 0$.

	Next, we choose $B_k$ in such a way that \eqref{LFsharp3} is bounded by \eqref{LFsharp2}. More specifically,  we assume that 
	\begin{equation}\label{Bk-mean}
		B_k\le \frac{(B_{k-1})^{1/2}}{\sqrt2}\frac{(B_{k+1})^{1/2}}{\sqrt2}, \quad 3\le k \le r-1.
	\end{equation}
	Then 
	$$
	\left| B_k(W)L_F^{k-1}  W  L_F^{k+1}W \right|  \leq \frac{B_{k-1}(W)}{4}\left(L_F^{k-1}W\right)^2+\frac{B_{k+1}(W)}{4}\left(L_F^{k+1}W\right)^2,
	$$
	so that 
	\begin{align*}
		\sum_{k=3}^{r-1}   \left|B_k(W)L_F^{k-1}  W  L_F^{k+1}W\right| &\le \sum_{k=4}^{r-2} \frac{B_{k}(W)}{2}\left(L_F^{k}W\right)^2
		+  \sum_{k=2,3,r-1,r} \frac{B_{k}(W)}{4}\left(L_F^{k}W\right)^2
		\\ &\le \sum_{k=2}^{r} \frac{B_{k}(W)}{2}\left(L_F^{k}W\right)^2
		-  \frac{B_{r-1}(W)}{4}\left(L_F^{r-1}W\right)^2.
	\end{align*}
	Similarly, 
	$$
	|B_r(W)L_F^{r-1}W  L_F^{r+1}W| \le \frac{B_{r-1}(W)}{4}\left(L_F^{r-1}W\right)^2 +  \frac{(B_r(W))^2}{B_{r-1}(W)}\left(L_F^{r+1}W\right)^2. 
	$$
	Inserting these estimate into  \eqref{LFsharp2}-\eqref{LFsharp3},  we find
	\begin{align*}
		L_FW^\sharp \le A_1L_FW -  \frac12\sum_{k=2}^{r} B_k(W)\left(L_F^kW\right)^2 +\frac{(B_r(W))^2}{B_{r-1}(W)}\left(L_F^{r+1}W\right)^2.
	\end{align*}
	Now we set $B_{r}:=1$. Then, according to~\eqref{asum_phi}
	$$
	A_1L_FW -  \frac12\sum_{k=2}^{r} B_k(W)\left(L_F^kW\right)^2\le -\frac{\varphi(W)}{2},
	$$
	since  $A_1, B_k \ge 1$.
	On the other hand, 
	$$
	\frac{(B_r(W))^2}{B_{r-1}(W)}\left(L_F^{r+1}W\right)^2 \le \frac{(\Phi(W))^2}{B_{r-1}(W)}.
	$$
	We  choose $B_{r-1}$ in such a way that $\displaystyle{ \frac{(\Phi(W))^2}{B_{r-1}(W)}  = \frac{\varphi(W)}{4}}$,
	i.e.
	\begin{equation}\label{Br-1}
		B_{r-1} = \frac{4\Phi^2}{\varphi}.
	\end{equation}
	Note that $B_{r-1}\ge 1$, due to \eqref{>1}. Then, 
	$$
	L_F W^\sharp \le - \frac{\varphi(W)}{4}.
	$$
	It remains to choose the functions $B_k\ge 1$, $k<r-1$, satisfying \eqref{Bk-mean}, which is equivalent to
	$$
	B_{k-1} \ge \frac{4 B_k^2}{B_{k+1}}.
	$$
	We choose them to be minimal, that is replace the inequality above by the equality. Since $B_{r}=1$ and $B_{r-1}$ is given by \eqref{Br-1}, we find 
	\be\lbl{B_r_def}
  B_k =  2^{(r-k)(r-k+1)}\Big(\frac{\Phi^2}{\varphi}\Big)^{r-k}.
	\ee

 Finally, we note that the constructed in such a way function $W^\sharp$ is defined outside the compact set $\cK^W_{Q}$ and is $C^1$-smooth. Indeed,   $W(x)>Q$ for $x\notin \cK^W_{Q}$, so the functions $\phi(W(x))$ and $\Phi(W(x))$ are well-defined and the function $\phi(W(x))$ appearing in the denominator of \eqref{B_r_def} does not vanish.

To get from  $W^\sharp$ the desired strict Lyapunov function, which we denote $W_\eps^\sharp(x)$, we restrict $W^\sharp$ to the set $\cK_{Q+\eps}^W$ and $C^1$-smoothly continue it to the whole manifold $\cM$.
Such continuation exists since we can take
\[
 W_\eps^\sharp(x):= 
\left\{
\begin{array}{cl}
W^\sharp(x)\chi_\eps(W(x)) &\qmb{if } x\notin \cK_{Q}^W, \\
0 &\qmb{if } x\in \cK^W_Q,
\end{array}
\right.
\]
where $\chi$ is a smooth function satisfying $\chi(w)=1$ for $w\ge Q+\eps$ and $\chi(w)=0$ for $w\le Q+\eps/2$. 
\qed

%

\section{Chain of rotators}
\lbl{s:rot}

\subsection{Lyapunov function  and energy decay rate}
In this section we study the chain of rotators given by \eqref{H_rot}-\eqref{eq_intro}. 
 We  construct a strict Lyapunov function  outside a compact set that provides  lower bound \eqref{en_dec_intro} for the energy dissipation rate.

Writing the equations of motion \eqref{eq_intro} in more details, we get
\be\lbl{eq}
\dot q_j = p_j, \qquad \dot p_j = -\de_{j1} p_1 + V_{j-1}'(q_{j-1}-q_j) - V_j'(q_{j}-q_{j+1}), \quad 1\le j \le N,
\ee
where $V_0=V_N:= 0$.
Clearly, the Hamiltonian $H$ is a non-strict Lyapunov function for system \eqref{eq}  outside a compact set $\cK_Q^H$ with arbitrary $Q\in\R$, since it is proper and
\be\lbl{dot_H}
L_F H = - p_1^2 \le 0.
\ee
Here $F$ denotes the vector field from the r.h.s. of \eqref{eq} and $L_F$ is the Lie derivative along its direction.  
We assume that the interaction potentials $V_j$ satisfy
\begin{asmp}\lbl{asumR}
	Potentials $V_i:\,\T\mapsto \R$ are $C^3$-smooth 
	and $(V_i'(x))^2 + (V_i''(x))^2\ne 0$ for every $x\in\T$ and $1\le i \le N-1$.
	Moreover, without loss of generality we assume that $V_i\ge 1$ for any $i$, so $H\ge 1$ as well.
\end{asmp}

By simplifying proof of Proposition~\ref{thm: oscillators simple} below, one can easily check that Assumption~\ref{asumR} implies Assumption~\ref{asum1} with appropriate $Q$ and $r=4N-3$, once the potentials $V_j$ are $C^{r+1}$-smooth. So, Proposition~\ref{p:strict_Lyap} with $W=H$ provides a strict Lyapunov function $W^\sharp$ outside a compact set for system \eqref{eq}. However, it is difficult to identify the function $\phi$ from \eqref{asum_phi}, and consequently $W^\sharp$. 
 Moreover, the function $\Phi$ in \eqref{asum_Phi} grows at least  as $C p_1^{r}$ when $|p_1|\to\infty$ since $L_F^{r+1}H$ does, so the function $B_2$ from \eqref{B_k_form} grows at least as $C_1 p_1^{2r(r-2)}/\phi^{r-2}$. Our estimations for the function $\phi$ show that this gives too large upper bound for the function $W^\sharp$, leading to a bad lower bound for the energy decay rate. 

To sharpen the bound, below we suggest another  strict Lyapunov function outside a compact set, construction of which is motivated by \eqref{eq_sLyap}, but instead of the derivatives $L^k_F H$  we employ their appropriate parts. This construction is also related to the one presented in \cite[Theorem 3.1]{MM}.

\smallskip

Let us denote
\be\lbl{xi}
\xi_j(q):=- V_j'(q_{j}-q_{j+1}), \qmb{so that}\qu L_F p_j = \xi_j - \xi_{j-1} - \de_{1j}p_1,
\ee 
for $1 \le j \le N-1$, where $\xi_0:=0$.
Consider the function 
\be\lbl{LF_rot}
W(p,q) = a_0 H^{\ga_0} - \sum_{j=1}^{N-1}
\big(a_{2j-1} H^{\al_{2j-1}}p_j\xi_j  + a_{2j} H^{\al_{2j}}\xi_j L_F\xi_j\big),
\ee
where the constants $a_k \ge 1$ and 
\be\lbl{albet}
\ga_0:= 2N-1, \qquad \al_k := 2(N-1)-k \qmb{for}\qu 1\le k\le 2(N-1).
\ee
Due to the bounds
\be\lbl{pxiLxi}
|p_j\xi_j|\le C\sqrt H, \qquad |\xi_jL_F\xi_j|\le C\sqrt H,
\ee
the function $W$ satisfies
\be\lbl{HWH}
\frac{a_0}{2} H^{\ga_0} \le a_0H^{\ga_0} \big(1-CH^{-3/2}\big) \le W \le a_0H^{\ga_0}\big(1+CH^{-3/2}\big) \le 2a_0 H^{\ga_0},
\ee
since we assume $a_0$ to be is sufficiently large.
Here and below by $C,C_1,\dots$ we denote various positive constants which may depend on $N$ and change from line to line. 

\begin{thm}\lbl{t:rotators}
	Let $N\ge 2$,  the constants $a_{2N-2}$ and $a_k-a_{k+1}$  are sufficiently large for all $k$, and
	 Assumption~\ref{asumR} is satisfied. Then 
	 $W$ is a strict Lyapunov function outside a compact set  and 
	\be\lbl{L_form}
	L_F W \le -H + C_1 \le -C_2W^{1/\ga_0} + C_1,	
	\ee
	for appropriate constants $C_{1,2}=C_{1,2}(N; a_0,\dots, a_{2N-2})$.
\end{thm}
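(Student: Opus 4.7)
The plan is to expand $L_F W$ term by term, collect three families of negative quadratic dissipations, and show that every sign-indefinite remainder can be absorbed by Young's inequality into these dissipations by choosing the gaps $a_k-a_{k+1}$ (and $a_{2N-2}$) large enough. The power weights $\alpha_k = 2(N-1)-k$ and $\gamma_0 = 2N-1$ are designed precisely so that the absorption matches, with exactly one unit of $H$ paid per step.

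First, I would compute $L_F W$ using $L_F H = -p_1^2$, $L_F p_j = \xi_j-\xi_{j-1}-\delta_{1j}p_1$, and $L_F(\xi_j L_F\xi_j) = (L_F\xi_j)^2 + \xi_j L_F^2\xi_j$. Each piece of $W$ produces one favourable negative square and a list of remainders. The top term contributes $-a_0\gamma_0 H^{\gamma_0-1}p_1^2$; the term $-a_{2j-1}H^{\alpha_{2j-1}}p_j\xi_j$ contributes $-a_{2j-1}H^{\alpha_{2j-1}}\xi_j^2$ together with couplings of type $H^{\alpha_{2j-1}}\xi_{j-1}\xi_j$, $H^{\alpha_{2j-1}}p_j L_F\xi_j$, $\delta_{1j}H^{\alpha_1}p_1\xi_1$, and $H^{\alpha_{2j-1}-1}p_1^2 p_j\xi_j$; the term $-a_{2j}H^{\alpha_{2j}}\xi_j L_F\xi_j$ contributes $-a_{2j}H^{\alpha_{2j}}(L_F\xi_j)^2$ together with a propagation term $-a_{2j}H^{\alpha_{2j}}\xi_j L_F^2\xi_j$ and an $H^{\alpha_{2j}-1}p_1^2\xi_j L_F\xi_j$ remainder. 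Expanding $L_F^2\xi_j = -V_j'''(p_j-p_{j+1})^2 - V_j''(2\xi_j-\xi_{j-1}-\xi_{j+1} -(\delta_{1j}-\delta_{1,j+1})p_1)$ turns the propagation term into a linear combination of $H^{\alpha_{2j}}\xi_j\xi_{j\pm 1}$, $H^{\alpha_{2j}}\xi_j^2$, $H^{\alpha_{2j}}\xi_j p_1$, and $H^{\alpha_{2j}}\xi_j(p_j-p_{j+1})^2$.

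Second, I would absorb every cross term using the exponent identities $\alpha_{2j-1}=\alpha_{2j}+1$, $\alpha_{2j-1}+\alpha_{2j+1}=2\alpha_{2j}$, and $\gamma_0-1\geq \alpha_k$ for $k\geq 1$. Young's inequality then gives, schematically, $|H^{\alpha_{2j}}\xi_j\xi_{j\pm 1}|\le \tfrac12(H^{\alpha_{2j-1}}\xi_j^2+H^{\alpha_{2j+1}}\xi_{j+1}^2)$, splitting a coupling into two neighbouring $\xi^2$-dissipations; $|H^{\alpha_{2j-1}}p_j L_F\xi_j|\le \eta H^{\alpha_{2j}}(L_F\xi_j)^2+\eta^{-1}H^{\alpha_{2j-1}+1}p_j^2$, feeding one part into the $(L_F\xi_j)^2$-dissipation and the other into a $p_j^2$-remainder handled inductively at the next level; and every $H^{\alpha_k-1}p_1^2\cdot(\text{bounded})$ remainder is absorbed by the top dissipation because $\gamma_0-1\geq \alpha_k$. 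Choosing $a_0-a_1,a_1-a_2,\ldots,a_{2N-3}-a_{2N-2}$ and $a_{2N-2}$ larger than the Young constants then leaves a definite margin $c>0$ at every level, yielding
\begin{equation*}
L_F W\le -c\Bigl(H^{\gamma_0-1}p_1^2+\sum_{j=1}^{N-1}\bigl(H^{\alpha_{2j-1}}\xi_j^2+H^{\alpha_{2j}}(L_F\xi_j)^2\bigr)\Bigr)+C_1.
\end{equation*}

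Third, I would convert the weighted dissipation on the right into a lower bound of the form $H$. Since $V_j$ is bounded on the compact torus, $H$ and $\sum p_j^2$ differ by a constant, so it suffices to dominate $\sum p_j^2$ by the dissipation. Assumption~\ref{asumR} combined with compactness of $\T$ gives a uniform $c_V>0$ such that at every $x\in\T$ either $(V_j'(x))^2\ge c_V$ or $(V_j''(x))^2\ge c_V$. In the first regime $\xi_j^2\ge c_V$, so $H^{\alpha_{2j-1}}\xi_j^2\ge c_V H^{\alpha_{2j-1}}$ directly; in the second regime $(L_F\xi_j)^2\ge c_V(p_j-p_{j+1})^2$, so $H^{\alpha_{2j}}(L_F\xi_j)^2$ controls $H^{\alpha_{2j}}(p_j-p_{j+1})^2$. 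An induction on $j$ uses the level-$0$ estimate to control $p_1^2$ and then, at each subsequent level, recovers $p_{j+1}^2$ from $p_j^2$ plus the level-$j$ dissipation. The net outcome is $L_F W\le -H+C_1$ once $H\ge Q$ for some large $Q$. Combined with $H\ge (W/(2a_0))^{1/\gamma_0}$ from \eqref{HWH}, this gives the second inequality of \eqref{L_form}; properness of $W$ follows from $W\ge (a_0/2)H^{\gamma_0}$ and properness of $H$.

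The main obstacle is the bookkeeping of Step 2: the exponents $\alpha_k=2(N-1)-k$ admit no slack, since each cross absorption loses precisely one unit of $H$ and must land on a neighbouring dissipation weight. A secondary subtlety is the dichotomy in Step 3: whichever of $V_j'$ or $V_j''$ is locally non-vanishing must be matched to the correct dissipation family, and the resulting lower bound must remain of order $H$ after descending all $N-1$ levels. Both are paid for by choosing $\gamma_0=2N-1$, which accommodates exactly one $p_1^2$-absorption per cascade step.
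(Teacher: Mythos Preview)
Your proposal is correct and follows essentially the same approach as the paper: compute $L_F W$, absorb the cross terms via Young's inequality exploiting the exponent ladder $\alpha_k = 2(N-1)-k$, then use Assumption~\ref{asumR} to convert the weighted dissipation into a $p_j^2$-cascade that dominates $H$. The only notable difference is that where you argue by a pointwise dichotomy on $V_j'$ versus $V_j''$, the paper handles both cases in one stroke via $H\xi_j^2 + (L_F\xi_j)^2 \ge \tfrac12\bigl((V_j')^2+(V_j'')^2\bigr)p_{j+1}^2 - Cp_j^2 \ge C^{-1}p_{j+1}^2 - Cp_j^2$ (using $H \ge p_{j+1}^2/2$), which cleanly produces the term $\sum_{j\ge 2} a_{2j-2}H^{\alpha_{2j-2}}p_j^2$ needed to absorb the $p_j^2$ remainders you defer as ``handled inductively at the next level''.
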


In the following corollary we analyse the energy decay rate in the system of rotators. 
Let $(p,q)(t)$ be a solution to \eqref{eq}. Below we denote $W_t:=W(p,q)(t)$ and $H_t:=H(p,q)(t)$. Recall that $\ga_0 = 2N-1$.
\begin{cor}\lbl{c:en_dec}
Under Assumption~\ref{asumR},	there are $N$-dependent constants $h_0\ge 1$, $0<\eps<1$ and $C>0$ such that for $H_0\ge h_0$, 
\be\lbl{W_tW_0}
W_t\le W_0 - Ct W_0^{1/\ga_0} \qmb{for any}\qu 0\le t\le \eps H_0^{\ga_0-1}
\ee 
and 
\be\lbl{H_tH_0}
H_t\le H_0 - \frac{Ct}{H_0^{\ga_0-2}} \qmb{for any}\qu \eps^{-1} H_0^{\ga_0-\frac52}\le t\le \eps H_0^{\ga_0-1}.
\ee 
\end{cor}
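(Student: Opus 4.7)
The plan is to integrate the differential inequality $\dot W_t \le -C_2 W_t^{1/\ga_0} + C_1$ from Theorem~\ref{t:rotators} by a bootstrap argument to obtain \eqref{W_tW_0}, and then to translate the bound on $W_t$ into an estimate on $H_t$ via the two-sided comparison \eqref{HWH}.

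For \eqref{W_tW_0}, I would choose $h_0$ so large that for $H_0 \ge h_0$ the inequality $W_0/2 \ge (a_0/4) H_0^{\ga_0} \ge (2C_1/C_2)^{\ga_0}$ holds; this guarantees that the source term $C_1$ is dominated by $(C_2/2) W_t^{1/\ga_0}$ as long as $W_t \ge W_0/2$. Set $T := \inf\{t \ge 0 : W_t < W_0/2\}$. For $0 \le t < T$ one has $\dot W_t \le -(C_2/2) W_t^{1/\ga_0} \le -c\, W_0^{1/\ga_0}$ with $c := (C_2/2)\cdot 2^{-1/\ga_0}$, so direct integration yields
$$ W_t \le W_0 - c\, t\, W_0^{1/\ga_0}. $$
This bound is self-consistent with $W_t \ge W_0/2$ as long as $c\, t\, W_0^{1/\ga_0} \le W_0/2$, i.e.\ $t \le (2c)^{-1} W_0^{1 - 1/\ga_0}$. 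Using $W_0 \le 2 a_0 H_0^{\ga_0}$ from \eqref{HWH} this converts into $T \ge \eps H_0^{\ga_0 - 1}$ for some $\eps > 0$, and \eqref{W_tW_0} follows on the stated interval.

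For \eqref{H_tH_0}, I would invoke \eqref{HWH} in both directions: from the upper bound on $W_0$ one gets $a_0 H_0^{\ga_0} \ge W_0 - C a_0 H_0^{\ga_0 - 3/2}$, and from the lower bound on $W_t$ one gets $a_0 H_t^{\ga_0} \le W_t + C a_0 H_t^{\ga_0 - 3/2}$. Since $L_F H = -p_1^2 \le 0$ the energy is non-increasing, so $H_t \le H_0$ and in particular $H_t^{\ga_0 - 3/2} \le H_0^{\ga_0 - 3/2}$. Subtracting the two inequalities and plugging in \eqref{W_tW_0} together with $W_0^{1/\ga_0} \ge (a_0/2)^{1/\ga_0} H_0$ gives
$$ a_0 \bigl( H_0^{\ga_0} - H_t^{\ga_0} \bigr) \ge c'\, t\, H_0 - 2 C a_0 H_0^{\ga_0 - 3/2}. $$
Combining this with the elementary mean-value bound $H_0^{\ga_0} - H_t^{\ga_0} \le \ga_0 H_0^{\ga_0 - 1}(H_0 - H_t)$ yields
$$ H_0 - H_t \ge \frac{c'\, t}{\ga_0 a_0 H_0^{\ga_0 - 2}} - \frac{2 C}{\ga_0 H_0^{1/2}}. $$

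The main obstacle, which also dictates the lower endpoint of the time window in \eqref{H_tH_0}, is that the principal term and the error term on the right-hand side above balance exactly when $t \sim H_0^{\ga_0 - 5/2}$. Choosing $\eps$ small enough ensures that the principal term is at least twice the error throughout $\eps^{-1} H_0^{\ga_0 - 5/2} \le t \le \eps H_0^{\ga_0 - 1}$, giving $H_0 - H_t \ge C\, t / H_0^{\ga_0 - 2}$, as required. Finally, one enlarges $h_0$ so that this time window is nonempty (which requires $h_0^{3/2} \ge \eps^{-2}$) and that all previous ``$H_0$ large'' conditions are met. No further analysis is needed; the proof is essentially bookkeeping once the differential inequality and the comparison \eqref{HWH} are in hand.
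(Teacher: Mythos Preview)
Your overall strategy matches the paper's closely: integrate the differential inequality from Theorem~\ref{t:rotators} to get \eqref{W_tW_0}, then use \eqref{HWH} together with the monotonicity $\dot H_t=-p_1^2\le 0$ to deduce \eqref{H_tH_0}. Your argument for \eqref{H_tH_0} is correct and in fact slightly more direct than the paper's (you bound the difference $H_0^{\ga_0}-H_t^{\ga_0}$ via the mean-value inequality, whereas the paper takes $\ga_0$-th roots of the ratio $W_t/W_0$ and expands); both routes produce the same error term of order $H_0^{-1/2}$, which is what forces the lower endpoint $\eps^{-1}H_0^{\ga_0-5/2}$.

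There is, however, a genuine logical gap in your bootstrap for \eqref{W_tW_0}. On $[0,T)$ you derive an \emph{upper} bound $W_t\le W_0-ct\,W_0^{1/\ga_0}$ and then claim that because this is ``self-consistent'' with $W_t\ge W_0/2$, one has $T\ge \eps H_0^{\ga_0-1}$. But an upper bound on $W_t$ cannot force a lower bound on $W_t$: self-consistency only says the upper bound does not \emph{contradict} $W_t\ge W_0/2$, not that it \emph{implies} it. (In addition, converting $(2c)^{-1}W_0^{(\ga_0-1)/\ga_0}$ into a multiple of $H_0^{\ga_0-1}$ requires the \emph{lower} bound $W_0\ge (a_0/2)H_0^{\ga_0}$ from \eqref{HWH}, not the upper one you invoke.)

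The fix is immediate and is exactly what the paper does: handle the range $t\ge T$ separately. For such $t$ one has $W_t\le W_0/2$, since $\dot W_t\le 0$ whenever $W_t$ exceeds the threshold $(2C_1/C_2)^{\ga_0}<W_0/2$ and hence $W_t$ cannot climb back above $W_0/2$. Meanwhile the right-hand side $W_0-Ct\,W_0^{1/\ga_0}$ remains $\ge W_0/2$ for all $t\le \eps H_0^{\ga_0-1}$ once $\eps$ is small (now using the lower bound on $W_0$). Thus \eqref{W_tW_0} holds on the full interval.
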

We call $CH^{-(\ga_0-2)}$ {\it energy decay rate} of system \eqref{eq} on the time interval from \eqref{H_tH_0}.
 
{\it Proof.} Let 
\[t_{stop} = \max\{0\le t\le \eps H_0^{\ga_0-1}: \; W_t \ge C^{-1}_{stop}W_0\}\]
for sufficiently large constant $C_{stop}$.
Since by \eqref{HWH} $W_0\ge Ch_0^{\ga_0}$, for  sufficiently large $h_0$ by \eqref{L_form} we have
\[
\dot W_t \le -C W_t^{1/\ga_0} \qmb{for}\qu 0\le t\le t_{stop}.
\]
 Integrating this inequality,  we get
\[
W_t^{(\ga_0-1)/\ga_0} \le W_0^{(\ga_0-1)/\ga_0} -Ct, \qquad t\le t_{stop}.
\]
Accordingly, 
\be\lbl{Wt-W0}
W_t \le W_0\Big(1-\frac{Ct}{W_0^{(\ga_0-1)/\ga_0}}\Big)^{\ga_0/(\ga_0-1)}
\le W_0\Big(1-\frac{C_1t}{W_0^{(\ga_0-1)/\ga_0}}\Big),
\ee
 since the ratio $\ds{\frac{C_1t}{W_0^{(\ga_0-1)/\ga_0}}}$ is small for $t\le t_{stop}\le \eps H_0^{\ga_0-1}$ once $\eps$ is. Thus, we get \eqref{W_tW_0} for $0\le t\le t_{stop}$. For $t_{stop}\le t\le \eps H_0^{\ga_0-1}$ we have $t\le  \eps C W_0^{(\ga_0-1)/\ga_0}$, so  \eqref{W_tW_0} also holds once $\eps$ is sufficiently small, since $W_t\le C^{-1}_{stop}W_0$ for $t\ge t_{stop}$.

Due to \eqref{HWH}, $H_t \le H_0/2$ for $t\ge t_{stop}$ once the constant $C_{stop}$ is sufficiently large. Then, \eqref{H_tH_0} holds true for $t_{stop}\le t \le \eps H_0^{\ga_0-1}$.
By \eqref{HWH} and \eqref{Wt-W0}, for $0\le t\le t_{stop}$,
\begin{align}
	H_t &\le H_0\left(\frac{1+C H_0^{-3/2}}{1-CH_t^{-3/2}}\right)^{1/\ga_0} \left(1 - \frac{Ct}{H_0^{\ga_0-1}}\right)^{1/\ga_0}
	\\ \non
	&\le \left(1 + C_1H_t^{-3/2}\right)\left(H_0 - \frac{C_2t}{H_0^{\ga_0-2}}\right) 
	\le H_0\big(1+ C_3 H_0^{-3/2}\big) - \frac{C_2t}{H_0^{\ga_0-2}},
\end{align}
where we have used that $C^{-1} H_0 \le H_t\le H_0$ in the considered time interval.  Then, for $t\ge \eps^{-1}H_0^{\ga_0-\frac52}$ we conclude
\eqref{H_tH_0}.
\qed

\subsection{Proof of the theorem}
We abbreviate $V_j:=V_j(q_j-q_{j+1})$. Unless otherwise is explicitly stated, in this proof constants $C, C_1, C_2, \dots$  never depend on the parameters $a_i$.
Without loss of generality we assume that the energy $H$ satisfies $H\ge h_0$ for sufficiently large $h_0$. Indeed, for $(p,q)$ such that $H(p,q)<h_0$ the r.h.s. of equation  \eqref{dot_L} below is bounded by a constant $C=C(h_0,a_0,\dots, a_{2N-2})$. 

\noindent {\it Step 1.} By \eqref{dot_H},
\begin{align}\non
	L_F &W =  \\\non
	& - \left(a_0\ga_0 H^{\ga_0-1} 
	- \sum_{j=1}^{N-1} \Big( a_{2j-1}\al_{2j-1} H^{\al_{2j-1}-1}p_j\xi_j + a_{2j}\al_{2j} H^{\al_{2j}-1}\xi_j L_f\xi_j\Big)\right)p_1^2 
	\\\lbl{dot_L}
	&- \sum_{j=1}^{N-1}\Big(a_{2j-1}H^{\al_{2j-1}} L_F(p_j\xi_j) + a_{2j}H^{\al_{2j}}L_F(\xi_jL_F\xi_j)\Big).
\end{align}
We start with estimating the terms of the last sum in \eqref{dot_L}. By \eqref{xi},  
$$
L_F(p_j\xi_j) =\xi_j L_Fp_j +  p_jL_F\xi_j = \xi_j(\xi_j - \xi_{j-1} - \de_{j1} p_1) + p_jL_F\xi_j.
$$
Let us bound $|p_jL_F\xi_j|\le \Ga_j H p_j^2 + \Ga_j^{-1}H^{-1}\big(L_F\xi_j\big)^2$, where constants $\Ga_j$ will be chosen later to be sufficiently large. Then, applying twice the inequality 
$|xy|\le x^2/4 + y^2$ to $xy=\xi_j\xi_{j-1}$ and $xy=\xi_j(\de_{j1}p_1)$,  
we find
\be\lbl{dot_pxi}
 L_F(p_j\xi_j) \ge \frac{\xi_j^2}{2} - \xi_{j-1}^2 - \de_{j1}p_1^2 
- \Ga_j H p_j^2  - \Ga_j^{-1} H^{-1} \big(L_F\xi_j\big)^2.
\ee
Next, we note that
\be\lbl{Lxi}
L_F \xi_j = - (p_j - p_{j+1})V_j''.
\ee
Differentiation $L_F$ applied to \eqref{Lxi} gives us
$$
L^2_F \xi_j = -(p_j-p_{j+1})^2V_j''' - (L_F p_j - L_F p_{j+1}) V_j''
$$
and by \eqref{xi},
$$
\big(L^2_F \xi_j\big)^2 \le C(p_j^4 + p_{j+1}^4 + \xi_{j-1}^2 + \xi_j^2 + \xi_{j+1}^2 + \de_{j1}p_1^2).
$$
Then, 
\begin{align}\non
L_F(\xi_jL_F\xi_j) &= (L_F\xi_j)^2 + \xi_j L^2_F\xi_j 
\ge (L_F \xi_j)^2 - a_{2j} H \xi_j^2 - (a_{2j} H)^{-1}(L_F^2 \xi_j)^2
\\\non &\ge   
(L_F\xi_j)^2 - a_{2j}H\xi_j^2 
\\ \lbl{LxiLxi}
&- 
 C(a_{2j}H)^{-1}(p_j^4 + p_{j+1}^4 + \xi_{j-1}^2 + \xi_j^2 + \xi_{j+1}^2 + \de_{j1}p_1^2).
\end{align}

\noindent {\it Step 2.} Substituting the r.h.s. of estimates \eqref{dot_pxi} and \eqref{LxiLxi} into the last sum in \eqref{dot_L},
we bring together positive terms of the result into
\be\lbl{I_pos}
I_{pos}:=\sum_{j=1}^{N-1}\Big(a_{2j-1}H^{\al_{2j-1}}\frac{\xi_j^2}{2} + a_{2j}H^{\al_{2j}}(L_F\xi_j)^2\Big).
\ee
 Applying the inequality
$(x-y)^2 \ge x^2/2 - y^2$
to the squared \eqref{Lxi}, we find 
\be\lbl{xi+p}
(L_F\xi_j)^2 \ge \frac{(V_j'')^2}{2}p_{j+1}^2 - Cp_j^2. 
\ee
Then, using that $\xi_j = -V_j'$ and $H\ge p_{j+1}^2/2$, we obtain by \eqref{xi+p}
\be\non
H\xi_j^2 + (L_F\xi_j)^2 \ge \frac{(V_j')^2 + (V_j'')^2}{2}p_{j+1}^2 - Cp_j^2 \ge C_1^{-1}p_{j+1}^2 -Cp_j^2,
\ee
according to Assumption~\ref{asumR}.
Recalling that $\al_{2j-1} = \al_{2j}+1$ and assuming $a_{2j-1}/2 \ge a_{2j}$, we find
\be\non
I_{pos}\ge \sum_{j=1}^{N-1}a_{2j}H^{\al_{2j}} \big(C_1^{-1}p_{j+1}^2 - Cp_j^2\big)
\ge \frac{C_1^{-1}}{2}\sum_{j=2}^N a_{2j-2}H^{\al_{2j-2}}p_j^2 - Ca_2H^{\al_2}p_1^2.
\ee
Finally, we bound a half of the term $I_{pos}$ by the estimate above and keep another half as it is, 
\begin{align}\lbl{Iposest}
I_{pos}\ge\frac12\sum_{j=1}^{N-1}\Big(a_{2j-1}H^{\al_{2j-1}}\frac{\xi_j^2}{2} &+ a_{2j}H^{\al_{2j}}(L_F\xi_j)^2\Big) 
\\\non
&+ C^{-1}\sum_{j=2}^N a_{2j-2}H^{\al_{2j-2}}p_j^2 - Ca_2H^{\al_2}p_1^2.
\end{align}

\noindent {\it Step 3.} Now we 
estimate the r.h.s. of \eqref{dot_L} using bounds \eqref{pxiLxi}, \eqref{dot_pxi}, \eqref{LxiLxi} and \eqref{Iposest}:
\be\non
L_F W \le -(I_{p_1} + I_{\xi} + I_{\dot\xi} + I_p),
\ee
where
\begin{align}\non
I_{p_1} &= p_1^2\,\Big(a_0\ga_0 H^{\ga_0-1} - C\sum_{j=1}^{N-1} \big(a_{2j-1}\al_{2j-1} H^{\al_{2j-1}-1/2} + a_{2j}\al_{2j} H^{\al_{2j}-1/2}\big) 
\\\non
& - a_1H^{\al_1} - \Ga_1a_1H^{\al_1+1} 
- CH^{\al_2-1}(p_1^2 + 1) -Ca_2H^{\al_2}\Big),
\end{align}
the term $I_{\xi}$ is given by
\ben
I_{\xi} = \sum_{j=1}^{N-1}\xi_j^2\,\Big(\frac{a_{2j-1}}{4}H^{\al_{2j-1}} - a_{2j+1}H^{\al_{2j+1}} -a_{2j}^2H^{\al_{2j}+1}
-C\big(H^{\al_{2j-2}-1} + H^{\al_{2j}-1} + H^{\al_{2j+2}-1}\big) \Big);
\een
here and below we denote $a_{2N-1}=a_{2N}=\Ga_N:=0$ and $\al_{2N-1}=\al_{2N}=\al_0:=-\infty$. The term $I_{\dot\xi}$ has the form
$$
I_{\dot\xi} = \sum_{j=1}^{N-1} (L_F\xi_j)^2\,\Big(\frac{a_{2j}}{2}H^{\al_{2j}} - \Ga_j^{-1}a_{2j-1}H^{\al_{2j-1}-1}\Big)
$$
and finally 
$$
I_p = \sum_{j=2}^{N} p_j^2\,\Big(C^{-1}a_{2j-2}H^{\al_{2j-2}} - \Ga_ja_{2j-1}H^{\al_{2j-1}+1} - Cp_j^2\big(H^{\al_{2j-2}-1} + H^{\al_{2j}-1}\big)\Big).
$$

Let $\Ga_j = 2a_{2j-1}/a_{2j}$. Then $I_{\dot\xi}=0$, since $\al_{2j-1}-1 = \al_{2j}$.
Next, using that $p_j^2\le 2H$, we find
\begin{align*}
I_p &\ge  \sum_{j=2}^{N} p_j^2 H^{\al_{2j-2}}\Big(C^{-1}a_{2j-2} - \Ga_ja_{2j-1} - C\big(1 +  H^{-2}\big)\Big)\,
\\
&\ge \sum_{j=2}^{N} p_j^2 H^{\al_{2j-2}} \ge H-C-\frac{p_1^2}{2},
\end{align*}
once $a_{2j-1}$ is sufficiently large in terms of $\Ga_ja_{2j-1} = 2a_{2j-1}^2/a_{2j}$.
Similarly,
$$
I_\xi\ge  \sum_{j=1}^{N-1}\xi_j^2 H^{\al_{2j-1}}\,\Big(\frac{a_{2j-1}}{4} - a_{2j+1}H^{-2} - a_{2j}^2 - C\big(1 + H^{-2} + H^{-4}\big)\Big) \ge 0,
$$ 
once $a_{2j-1}$ is sufficiently large in terms of $a_{2j+1}$ and $a_{2j}$.
Finally, 
$$
I_{p_1} \ge H^{\ga_0-1}\frac{p_1^2}{2} 
$$
once $a_0$ is sufficiently large.

The coefficients $a_k$ satisfying the restrictions above can be chosen iteratively.
Then, collecting together the estimates above, we get $I_{p_1} + I_{\xi} + I_{\dot\xi} + I_p \ge H-C$, so 
$$
L_F W \le -H + C.
$$
The last inequality in \eqref{L_form} holds due to \eqref{HWH}.
\qed

\section{Chain of oscillators}
\lbl{s:osc}

\subsection{The setting}
 In this section we establish the existence of a strict Lyapunov function outside a compact set for the chain of oscillators in presence of friction in the first oscillator.  To this end we show that Proposition~\ref{p:strict_Lyap} applies to the system under natural assumptions. We do not construct a Lyapunov function satisfying good bounds as we did for the chain of rotators in Section~\ref{s:rot}, since in the present case it can be done similarly.

 The Hamiltonian of the chain of oscillators is 
\be\lbl{H_osc}
H(p,q)=\sum_{j=1}^N \frac{p_j^2}{2} + \sum_{j=1}^N U_j(q_j) + \sum_{j=1}^{N-1} V_j(q_j-q_{j+1}),
\ee
where $N\ge 2$, $p=(p_1,\dots,p_N)\in\R^N$ denote moments of the oscillators while $q=(q_1,\dots,q_N)\in\R^N$ stand for their coordinates.
For simplicity, we assume the interaction potentials $V_k$ as well as the so-called pinning potentials $U_k$ to be smooth.
The equations of motion are given by \eqref{eq_intro}, or, in more details,
\begin{equation}
	\label{eq: main oscillators system with friction}
\dot q_j = p_j, \qquad \dot p_j = -\de_{j1} p_1 - U_j'(q_j) + V_{j-1}'(q_{j-1}-q_j) - V_j'(q_{j}-q_{j+1}), \quad 1\le j \le N,
\end{equation}
where $V_0=V_N:= 0$. If the Hamiltonian $H$ is proper, it gives a non-strict Lyapunov function outside a compact set $\cK^H_Q$ with any $Q\in\R$ since
\[
L_F H =  -p_1^2,
\]
where $F$ stands for the vector field from the r.h.s. of equation \eqref{eq: main oscillators system with friction}.

\subsection{Strictly convex case}
We start our analysis with a simple case of strictly convex potentials.
\begin{prop}
	\label{thm: oscillators simple}
	
	Suppose that for every $k$ the potentials $U_k(x)$ and $V_k(x)$ are strictly convex and have  minimum at $x=0$. Then for any $(p,q)\in\mathbb{R}^{2N}\sm\{0\}$ there exists $1\le p\le 4N-1$ such that $L_F^pH\ne 0$.
	
\end{prop}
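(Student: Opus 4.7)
The plan is to prove the contrapositive: if $L_F^k H(p_0,q_0)=0$ for all $k=1,\dots,4N-1$, then $(p_0,q_0)=0$. The first move is a Leibniz reduction. Since $L_F H=-p_1^2$, iterating $L_F$ and applying Leibniz to the product $p_1\cdot p_1$ gives
\[
L_F^k H \;=\; -\sum_{j=0}^{k-1}\binom{k-1}{j}\,\Phi_j\,\Phi_{k-1-j},\qquad \Phi_m:=L_F^m p_1.
\]
A simple induction on $m$ then shows: if $\Phi_0,\dots,\Phi_{m-1}$ all vanish at $(p_0,q_0)$, the even-order $L_F^{2m}H$ vanishes automatically, while $L_F^{2m+1}H$ collapses to $-\binom{2m}{m}\Phi_m^2$. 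Hence the hypothesis is equivalent to $\Phi_m(p_0,q_0)=0$ for $m=0,1,\dots,2N-1$, a cleaner condition since each $\Phi_m$ is a polynomial in $(p,q)$ computed directly from the equations of motion.

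Next I would propagate these vanishings through the chain. Put $f_j:=\partial_{q_j}H = U_j'(q_j)+V_j'(q_j-q_{j+1})-V_{j-1}'(q_{j-1}-q_j)$, so that $\dot p_j=-f_j-\de_{1j}p_1$. The inductive claim, on $j=1,\dots,N$, is that vanishing of $\Phi_0,\dots,\Phi_{2j-1}$ at $(p_0,q_0)$ forces $p_1=\cdots=p_j=0$ and $f_1=\cdots=f_j=0$. The base $j=1$ is immediate: $\Phi_0=p_1$ and $\Phi_1=-p_1-f_1$. For the step $j\to j+1$, I would compute $\Phi_{2j}$ and $\Phi_{2j+1}$ by differentiating along the flow; under the already-secured vanishings, $\Phi_{2j}$ collapses to $\pm\big(\prod_{i=1}^{j}V_i''(q_i-q_{i+1})\big)\,p_{j+1}$, and $\Phi_{2j+1}$ to a scalar multiple of $f_{j+1}$. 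Provided the product of the $V_i''$'s is nonzero, the two new vanishings yield $p_{j+1}=0$ and $f_{j+1}=0$, closing the induction.

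At $j=N$ one has $p_0=0$ and $\nabla_q H(q_0)=0$. Since each $U_j$ is strictly convex in $q_j$ and each $V_j$ is convex in $q_j-q_{j+1}$, the map $q\mapsto H(p,q)=\sum_j U_j(q_j)+\sum_j V_j(q_j-q_{j+1})$ is strictly convex on $\R^N$ (a sum of convex terms in which each coordinate $q_j$ appears in a strictly convex summand). It therefore has a unique critical point, which by hypothesis is $q=0$. Hence $q_0=0$, completing the contrapositive.

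The main obstacle is the inductive step when some $V_i''(q_i-q_{i+1})$ vanishes at the base point: strict convexity of $V_i$ only ensures that $V_i'$ is strictly increasing, which is compatible with isolated zeros of $V_i''$. At such a degenerate base point the leading coefficient in $\Phi_{2j}\propto p_{j+1}$ is zero, so one must invoke the first nonvanishing higher derivative $V_i^{(k)}$ and consume extra Lie derivatives to still extract $p_{j+1}=0$. The budget $4N-1$ (rather than $2N$) is precisely what accommodates this descent through the chain; arranging the bookkeeping so that the degenerate branches of the induction are absorbed into the remaining $\Phi_m$'s, and using smoothness together with strict monotonicity of $V_i'$ to preclude the simultaneous vanishing of too many derivatives, is the main technical burden.
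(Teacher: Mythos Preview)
Your approach is essentially the paper's: the Leibniz reduction from $L_F^kH=0$ to $\Phi_m:=L_F^mp_1=0$, then an induction down the chain using $V_k''\neq 0$ to peel off $p_{k+1}$ and $f_{k+1}$ at each step, and finally the identification of the equilibrium $q=0$. Your endgame is in fact a bit cleaner than the paper's: you observe that $q\mapsto H(p,q)$ is strictly convex on $\R^N$ and hence has $q=0$ as its unique critical point, whereas the paper solves the system $\{U_k'-V_{k-1}'+V_k'=0\}$ by a sign-chasing argument.

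Your final paragraph, however, is off. The paper reads ``strictly convex'' in the strong sense $U_k''>0$, $V_k''>0$ everywhere (it uses $V_1''\neq 0$ directly in its proof), so no degenerate case arises in Proposition~\ref{thm: oscillators simple}. More importantly, your ``extra budget'' explanation of the constant $4N-1$ is wrong: via Leibniz, the conditions $L_F^kH=0$ for $1\le k\le 4N-1$ are \emph{exactly} equivalent to $\Phi_0=\cdots=\Phi_{2N-1}=0$, and your induction consumes precisely these $2N$ conditions (two per step over $N$ steps). There is no slack. The genuinely degenerate situation, where $V_k''$ may vanish but $(V_k'')^2+(V_k''')^2\neq 0$, is treated separately in the paper (Theorem~\ref{thm: oscillators complicated}) and requires the much larger bound $3\cdot 2^{N+1}-5$, not $4N-1$; so your proposed patch within the budget $4N-1$ would not work. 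Simply drop the last paragraph and your argument is complete.
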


\begin{cor}
	
 Under assumptions of Proposition~\ref{thm: oscillators simple}, system \eqref{eq: main oscillators system with friction} satisfies Assumption \ref{asum1} with $W=H$, $r=4N-1$ and $Q=H(0,0)$. Hence it admits a strict Lyapunov function $W^\sharp$ outside a compact set, constructed in Proposition~\ref{p:strict_Lyap}.
	
\end{cor}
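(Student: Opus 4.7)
I will establish the contrapositive. Assume $L_F^\ell H(p,q)=0$ for all $\ell=1,\dots,4N-1$ and deduce $(p,q)=0$. Consider the trajectory $(p(t),q(t))$ starting at $(p,q)$. Since $L_F H=-p_1^2$, one has $L_F^\ell H\big|_{t=0}=-(p_1^2)^{(\ell-1)}(0)$ along this trajectory. Writing $(p_1^2)^{(k)}=\sum_{j=0}^k\binom{k}{j}p_1^{(j)}p_1^{(k-j)}$ and letting $j_0$ denote the least index with $p_1^{(j_0)}(0)\ne 0$, the first non-vanishing $(p_1^2)^{(k)}(0)$ occurs at $k=2j_0$ with value $\binom{2j_0}{j_0}\bigl(p_1^{(j_0)}(0)\bigr)^2$. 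Hence vanishing of $L_F^\ell H$ for $\ell=1,\dots,4N-1$ forces $p_1^{(j)}(0)=0$ for all $j=0,\dots,2N-1$, equivalently $p_1(t)=O(t^{2N})$.

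Next I would propagate this high-order zero down the chain by induction on $k=1,\dots,N$, with inductive claim
\[
p_k(t)=O\bigl(t^{2(N-k+1)}\bigr),\qquad q_k(t)-q_k(0)=O\bigl(t^{2(N-k+1)+1}\bigr).
\]
The base case is exactly what was just obtained. For the inductive step I use the equation of motion
\[
\dot p_{k-1}+\delta_{k-1,1}p_{k-1}=-U_{k-1}'(q_{k-1})+V_{k-2}'(q_{k-2}-q_{k-1})-V_{k-1}'(q_{k-1}-q_k).
\]
By the induction hypothesis all terms except $V_{k-1}'(q_{k-1}-q_k)$ equal a constant (their value at $t=0$) plus $O\bigl(t^{2(N-k+1)+1}\bigr)$, forcing $V_{k-1}'(q_{k-1}(t)-q_k(t))$ to have the same form. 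Strict convexity of $V_{k-1}$ makes $V_{k-1}'$ a smooth diffeomorphism, so inverting yields $q_{k-1}(t)-q_k(t)=\text{const}+O\bigl(t^{2(N-k+1)+1}\bigr)$. Combined with the hypothesis for $q_{k-1}$, this gives the claimed estimate for $q_k$, and then $p_k=\dot q_k=O\bigl(t^{2(N-k+1)}\bigr)$. The induction terminates at $k=N$ with $p_N(t)=O(t^2)$.

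Evaluating at $t=0$ produces $p_k(0)=0$ for every $k$. Since also $\dot p_k(0)=0$ (because $p_k(t)=O(t^2)$), substituting into each equation of motion yields the system of equilibrium equations
\[
U_k'(q_k(0))-V_{k-1}'(q_{k-1}(0)-q_k(0))+V_k'(q_k(0)-q_{k+1}(0))=0,\qquad k=1,\dots,N,
\]
which is exactly $\nabla\Phi(q(0))=0$ for the potential $\Phi(q)=\sum_{k=1}^N U_k(q_k)+\sum_{k=1}^{N-1}V_k(q_k-q_{k+1})$. Here $\Phi$ is strictly convex (the sum of the strictly convex $\sum_k U_k(q_k)$ and the convex $\sum_k V_k(q_k-q_{k+1})$) with minimum at $q=0$, so its unique critical point is $q(0)=0$. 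Therefore $(p,q)=0$, completing the contrapositive.

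The main obstacle is the careful order-bookkeeping in the propagation: each passage from particle $k-1$ to particle $k$ consumes exactly two orders of $t$-vanishing, so the initial order $2N$ in $p_1$ is just enough to reach $p_N(t)=O(t^2)$ while extracting the $N$ equilibrium conditions needed for convexity to pin down $q(0)$. This tight matching is precisely what produces the constant $4N-1=2(2N)-1$ in the statement.
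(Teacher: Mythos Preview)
Your proof is correct and follows essentially the same route as the paper's proof of Proposition~\ref{thm: oscillators simple}: reduce $L_F^\ell H=0$ for $\ell\le 4N-1$ to $L_F^j p_1=0$ for $j\le 2N-1$, then propagate two orders at a time down the chain using $V_k''\ne 0$, arriving at $p=0$ and the equilibrium system~\eqref{eq: main equilibrium system}. Your phrasing in terms of Taylor orders $p_k(t)=O(t^{2(N-k+1)})$ along the trajectory is equivalent to the paper's bookkeeping with iterated Lie derivatives $L_F^j p_k$; your inversion of $V_{k-1}'$ plays the same role as the paper's division by $V_{k-1}''$ (both tacitly use $V_{k-1}''>0$, not merely strict convexity).

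The one place you differ substantively is the final step showing $q=0$. The paper runs a sign-chasing argument through the equations of~\eqref{eq: main equilibrium system}, using monotonicity of each $U_k'$ and $V_k'$ to force $0\le q_1\le\cdots\le q_N\le 0$. Your observation that~\eqref{eq: main equilibrium system} is exactly $\nabla\Phi(q)=0$ for the strictly convex potential $\Phi(q)=\sum_k U_k(q_k)+\sum_k V_k(q_k-q_{k+1})$, hence has the unique solution $q=0$, is cleaner and more conceptual; it also shows at a glance why the assumption that each potential has its minimum at the origin is exactly what is needed.
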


\begin{proof}[Proof of Proposition~\ref{thm: oscillators simple}]

	Suppose that for some $(p,q)$ we have $L_F^kH=0$ for any $k\le 4N-1$. Since
	\[
	L_F^{k+1} H = -\sum_{j=0}^k \binom{k}{j} L_F^j p_1 L_F^{k-j}p_1,
	\]
	the following vanishings are equivalent:
	\[
	\forall 1\le k\le 4N-1:\ \ L_F^k H=0
	\quad\Leftrightarrow\quad
	\forall 0\le k \le 2N-1:\ \ L_F^k p_1=0.
	\]
	Denote $U_k=U_k(q_k)$ and $V_k=V_k(q_k-q_{k+1})$ for short. Then
	\[
	L_F p_1 = -U_1'-V_1'=0
	\quad\text{and}\quad
	L_F^2 p_1 = -(U_1''+V_1'')p_1 + V_1''p_2.
	\]
	Since $V_1''\ne 0$, if $p_1=L_Fp_1=0$ then
	\[
	\forall 2\le j\le 2N-1:\ L_F^j p_1 = 0
	\quad\Rightarrow\quad
	\forall 0\le j\le 2N-3:\ L_F^j p_2 = 0.
	\]
	Similarly,
	\[
	L_F p_2 = -U_2'+V_1'-V_2'
	\quad\text{and}\quad
	L_F^2 p_2 = V_1''p_1-(U_2''+V_2''+V_1'')p_2 + V_2''p_3.
	\]
	Again, since $V_2''\ne0$, if $p_1=L_Fp_1=p_2=L_Fp_2=0$, then
	\[
	\forall 2\le j\le 2N-3:\ L_F^j p_2 = 0
	\quad\Rightarrow\quad
	\forall 0\le j\le 2N-5:\ L_F^j p_3 = 0.
	\]
	Proceeding by induction, we obtain
	\[
	\forall 0\le j \le 2N-1:\ \ L_F^j p_1=0
	\quad\Rightarrow\quad
	\forall 1\le k\le N: \ \ p_k=L_Fp_k=0.
	\]
	So $p=0$ and equations $L_Fp_k=0$ form the following system:
	\begin{equation}
		\label{eq: main equilibrium system}
		\begin{cases}
			U_1'+V_1'=0\\
			U_2'-V_1'+V_2'=0\\
			\ldots\\
			U_{N-1}'-V_{N-2}'+V_{N-1}'=0\\
			U_N'-V_{N-1}'=0.\\           
		\end{cases}
	\end{equation}
Since $x=0$ is the unique minimum for the potentials $U_k$,  $U_k'(x)>0$ for $x>0$ and vice versa.
	Suppose that $q_1\ge 0$ (the case $q_1\le 0$ is similar). Then $U_1'\ge 0$ and the first equation implies
	\[
	V_1'\le 0\ \Rightarrow\ q_2\ge q_1\ge 0\ \Rightarrow\ 
	U_2'\ge0\ \Rightarrow U_2'-V_1' \ge0.
	\]
	So the second equation implies
	\[
	V_2'\le 0\ \Rightarrow\ q_3\ge q_2\ge 0\ \Rightarrow\ 
	U_3'\ge0\ \Rightarrow U_3'-V_2' \ge0,
	\]
	and so on. The penultimate equation implies
	\[
	V_{N-1}'\le0\ \Rightarrow\ q_N\ge q_{N-1}\ge 0\ \Rightarrow\ 
	U_N'\ge0\ \Rightarrow U_N'-V_{N-1}' \ge0,
	\]
	And the last equation is $U_N'-V_{N-1}'=0$. Hence $U_N'=0$ and $q_N=0$. Therefore $0=q_N\ge q_{N-1}\ge\ldots\ge q_1\ge 0$. So $q=0$.
\end{proof}

\subsection{General case}
 Now we extend the argument to a more general case when potentials $U_k$ and $V_k$ are strictly convex outside a compact set.

\begin{thm}
	\label{thm: oscillators complicated}
	
	Suppose that 
	 $V_k''(x)>0$ for $|x|\ge R$ with some $R\ge 0$, $U_k'(x)\to\pm\infty$
	as $x\to\pm\infty$, and $V_k''(x)\ne 0$ or $V_k'''(x)\ne 0$ at every $x\in\R$. Then there is a bounded set $K\subset\R^{2N}$ such that for any $(p,q)\in\mathbb{R}^{2N}\sm K$ there exists $1\le j\le 3\cdot 2^{N+1}-5$ for which $L_F^jH\ne 0$. 
\end{thm}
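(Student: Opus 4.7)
The plan is to adapt the proof of Proposition~\ref{thm: oscillators simple} in two ways: first, replacing the direct propagation step (which used $V_k''\neq 0$) with a more elaborate one that also handles the zeros of $V_k''$ by invoking $V_k'''\ne 0$; and second, bounding the locus of equilibria (rather than showing it reduces to a single point) using the coercivity of $U_k'$ and properness of $V_k'$.

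First I would reduce, exactly as in the simple case, the vanishing of $L_F^j H$ for $1\le j\le J$ to the vanishing of $L_F^j p_1$ for $0\le j\le (J-1)/2$, using the identity $L_F^{k+1}H=-\sum_{j=0}^{k}\binom{k}{j}L_F^j p_1\, L_F^{k-j}p_1$. Then I would set up an induction on $k$ with the following statement: if $L_F^j p_1=0$ for $0\le j\le M_k$, then $p_1=L_F p_1=\ldots=p_k=L_F p_k=0$. The base case is immediate. For the inductive step, recall the identity $L_F^2 p_k = V_{k-1}''p_{k-1}-(U_k''+V_{k-1}''+V_k'')p_k+V_k''p_{k+1}$ (and analogous formula for $k=1$). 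When $V_k''(q_k-q_{k+1})\ne 0$ at the point, two extra Lie derivatives suffice to pass from $p_k$ to $p_{k+1}$, exactly as in Proposition~\ref{thm: oscillators simple}. When $V_k''=0$ at the point, the same identity is vacuous and one must compute $L_F^3 p_k$; the surviving term, after using the already-established vanishings, is proportional to $V_k'''\,(p_k-p_{k+1})^2=V_k''' p_{k+1}^2$, and the hypothesis $V_k'''\ne 0$ forces $p_{k+1}=0$. Extracting successive derivatives $L_F^i p_{k+1}$ in this degenerate case is more costly, since each application of $L_F$ can only extract information when it differentiates the degenerate factor $V_k''$. Carefully bookkeeping this cost leads to a recursion of the form $M_{k+1}\le 2M_k+c$, which gives $M_N=O(2^N)$ and, translated back to derivatives of $H$, yields the stated bound $3\cdot 2^{N+1}-5$.

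Once the induction is complete we have $p=0$ and the equilibrium relations (compare \eqref{eq: main equilibrium system})
\[
U_1'(q_1)+V_1'(q_1-q_2)=0,\quad U_k'(q_k)-V_{k-1}'(q_{k-1}-q_k)+V_k'(q_k-q_{k+1})=0,\quad U_N'(q_N)-V_{N-1}'(q_{N-1}-q_N)=0.
\]
Telescoping these gives $\sum_{k=1}^N U_k'(q_k)=0$ together with $V_k'(q_k-q_{k+1})=-\sum_{j=1}^{k}U_j'(q_j)$. Because $V_k''(x)>0$ for $|x|\ge R$, the function $V_k'$ is proper, so $|q_k-q_{k+1}|$ is controlled by $\sum_j|U_j'(q_j)|$. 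Then I would argue: if $q_1$ is bounded, the first relation and properness of $V_1'$ bound $q_2$, and inductively all $q_k$. If instead $|q_1|\to\infty$, then $|U_1'(q_1)|\to\infty$ and (by the same reasoning, with signs consistent because $U_k'\to\pm\infty$ at $\pm\infty$) all $|q_k|\to\infty$ with the same sign, whence the terms $U_k'(q_k)$ all have the same sign and cannot sum to zero. Hence the equilibrium set is bounded, and together with $p=0$ this exhibits the required bounded set $K$.

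The main obstacle is the inductive step in the degenerate case $V_k''=0$: propagating not only $p_{k+1}=0$ but also the high-order vanishings $L_F^j p_{k+1}=0$ that are required to continue the induction. Here, each application of $L_F$ to a relation of the form ``$V_k''\cdot(\text{stuff})=0$'' at a point where $V_k''$ vanishes loses effective information, and only after several differentiations does the derivative $V_k'''$ resurface with a nontrivial coefficient. Quantifying this loss is what forces the cost at level $k+1$ to roughly double the cost at level $k$, producing the exponential bound. Everything else is either a direct repetition of arguments already present in the proof of Proposition~\ref{thm: oscillators simple} or elementary analysis of the equilibrium system.
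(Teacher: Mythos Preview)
Your propagation argument is essentially the paper's, only the paper packages the bookkeeping via an ``order'' function $\ord(g)$ (the least $k$ with $L_F^k g\ne 0$), which is multiplicative: $\ord(g_1g_2)=\ord(g_1)+\ord(g_2)$. The degenerate step then becomes a two-line computation: from $\ord(V_k''\,p_{k+1})=\ord(p_k)-2$ and, when $V_k''=0$, $\ord(V_k'')=\ord(p_{k+1})+1$ (since $L_FV_k''=V_k'''(p_k-p_{k+1})$ with $V_k'''\ne 0$ and $\ord(p_k)>\ord(p_{k+1})$), one reads off $\ord(p_{k+1})=\tfrac12(\ord(p_k)-3)$. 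That is exactly your doubling recursion, and it yields $\ord(p_k)\ge 3(2^{N-k+1}-1)$ and hence the bound $3\cdot 2^{N+1}-5$ with no further work.

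Your boundedness argument, however, has a real gap. The hypothesis $V_k''(x)>0$ for $|x|\ge R$ does \emph{not} make $V_k'$ proper: take $V_k(x)=\sqrt{1+x^2}$, for which $V_k''>0$ everywhere while $V_k'(x)\to\pm 1$ as $x\to\pm\infty$. So from ``$V_k'(q_k-q_{k+1})$ bounded'' you cannot infer ``$q_k-q_{k+1}$ bounded'', and your step ``$q_1$ bounded $\Rightarrow q_2$ bounded'' breaks down. The paper avoids this by a sign-propagation that uses only one-sided monotonicity. Choose $M=1+\max_k\max_{|x|\le R}|V_k'(x)|$ and $a>R$ with $U_k'(x)\ge 2M$ for $x\ge a$. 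If $k$ is the least index with $q_k\ge a$, then $q_{k-1}-q_k<0\le R$ gives $V_{k-1}'(q_{k-1}-q_k)\le M-1$, so the $k$-th equilibrium equation forces $V_k'(q_k-q_{k+1})\le -(M+1)$; since $V_k'(x)\ge -(M-1)$ whenever $x\ge -R$, this forces $q_k-q_{k+1}<-R$, hence $q_{k+1}>q_k\ge a$. Iterating reaches $q_N\ge a$, contradicting the last equation. Only the bounds $V_k'(x)\le M-1$ for $x\le R$ and $V_k'(x)\ge -(M-1)$ for $x\ge -R$ are used, and those follow from $V_k''>0$ outside $[-R,R]$ without any properness of $V_k'$.
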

Note that under assumptions of the theorem, the Hamiltonian $H$ is proper.
 
 From the proof of the theorem it can be seen that 
\[K\subset \{p=0, \; -b<q_k<a \; \forall k\}\] 
with $a,b$ defined in \eqref{a_VU}, \eqref{b_VU}.

\begin{cor}
	
 Under conditions of Theorem~\ref{thm: oscillators complicated}, system \eqref{eq: main oscillators system with friction} satisfies Assumption~\ref{asum1} with $W=H$, $r=3\cdot 2^{N+1}-5$ and $Q$ such that $\cK^H_Q \supset K$. Hence it admits a strict Lyapunov function $W^\sharp$ outside a compact set, constructed in Proposition~\ref{p:strict_Lyap}.
	
\end{cor}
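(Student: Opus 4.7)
The strategy is to adapt the induction underlying Proposition~\ref{thm: oscillators simple} so as to accommodate points where $V_j''$ vanishes. First, I would observe that the same identity
\[
L_F^{k+1}H = -\sum_{j=0}^{k}\binom{k}{j}\,L_F^j p_1\,L_F^{k-j} p_1
\]
renders the hypothesis ``$L_F^j H=0$ for $1\le j\le 2m+1$'' equivalent to ``$L_F^k p_1 = 0$ for $0\le k\le m$''. Choosing $m := 3\cdot 2^N - 3$, so that $2m+1 = 3\cdot 2^{N+1}-5$ matches the bound in the statement, the task becomes: show that the latter vanishings force $(p,q)$ into a bounded set.

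The main step will be an induction on $j = 1, \dots, N$ proving that $L_F^k p_j = 0$ for all $0 \le k \le K_j$ with $K_j := 3\cdot 2^{N-j+1} - 3$ (so $K_1 = m$ and $K_N = 3$). The step $j \Rightarrow j+1$ splits according to whether $V_j''(q_j - q_{j+1}) \ne 0$ (the \emph{regular} subcase) or $V_j''(q_j - q_{j+1}) = 0$, in which case the standing hypothesis forces $V_j'''(q_j - q_{j+1}) \ne 0$ (the \emph{singular} subcase). In the regular subcase, an expansion of the form
\[
L_F^{k+2} p_j = V_j''(q_j - q_{j+1})\,L_F^k p_{j+1} + (\text{terms vanishing under the induction hypothesis})
\]
will yield $L_F^k p_{j+1}=0$ for $0 \le k \le K_j - 2 \ge K_{j+1}$, mirroring the simple case. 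In the singular subcase the leading coupling between $p_j$ and $p_{j+1}$ is carried by $V_j'''$ and enters \emph{quadratically}; I expect expansions of the schematic form
\[
L_F^{2l+3} p_j = -V_j'''(q_j - q_{j+1})\bigl(L_F^l p_{j+1}\bigr)^2 + (\text{inductively vanishing terms}),
\]
from which $L_F^l p_{j+1} = 0$ is successively extracted for $0 \le l \le (K_j - 3)/2 = K_{j+1}$. The halving $K_{j+1} = (K_j-3)/2$ at each singular step is precisely what produces the exponential growth $K_1 = 3\cdot 2^N - 3$ and hence the bound $r = 3\cdot 2^{N+1}-5$.

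Once the induction is complete, $p_j = L_F p_j = 0$ for every $j$, so $p=0$ and the coordinates satisfy the equilibrium system \eqref{eq: main equilibrium system}. Telescoping gives $V_k'(q_k - q_{k+1}) = -\sum_{i=1}^k U_i'(q_i)$ for each $k$. Since $V_k''>0$ outside $[-R,R]$, the derivative $V_k'$ is strictly monotone on each of the rays $[R,\infty)$ and $(-\infty,-R]$, so that on any bounded $x$-set its values are controlled by those on $[-R,R]$. Combined with the coercivity $U_k'(x) \to \pm\infty$ as $x\to\pm\infty$, a sign-chasing argument in the spirit of the last paragraph of the proof of Proposition~\ref{thm: oscillators simple} will confine each $q_k$ to an interval $[-b,a]$ determined by $\sup_{k,\,|x|\le R}|V_k'(x)|$ and the coercivity rate of the $U_k'$, producing the bounded set $K$ required by the statement.

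The hardest part is expected to be verifying the singular-case expansion: identifying precisely which combination of $\{L_F^l p_{j+1}\}_{l \le K_{j+1}}$ appears in $L_F^{2l+3}p_j$ when $V_j''=0$, checking that the leading quadratic term is a nonzero multiple of $V_j'''$, and confirming that all subsidiary contributions vanish under the induction hypothesis. Achieving the count $K_{j+1}=(K_j-3)/2$ — hence the specific bound $3\cdot 2^{N+1}-5$ rather than something larger — will require careful bookkeeping of the nested Lie derivatives.
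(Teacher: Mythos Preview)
Your plan is correct and mirrors the paper's own argument: the Corollary is immediate from Theorem~\ref{thm: oscillators complicated}, whose proof (Lemmas~\ref{lm: orders} and~\ref{lm: set of zeros is bounded}) follows exactly the inductive scheme you outline, with the same dichotomy $V_j''\ne 0$ versus $V_j''=0,\ V_j'''\ne 0$ and the same halving $K_{j+1}=(K_j-3)/2$ in the singular case. The only notable difference is packaging: instead of expanding $L_F^{2l+3}p_j$ explicitly, the paper introduces an ``order'' function $\ord(g)$ with the multiplicative property $\ord(g_1g_2)=\ord(g_1)+\ord(g_2)$, which reduces your ``hardest part'' to the two-line computation $\ord(V_j''p_{j+1})=\ord(p_j)-2$ together with $\ord(V_j'')=\ord(p_{j+1})+1$ when $V_j''=0$.
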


%
%

 Proof of Theorem~\ref{thm: oscillators complicated} is based on the following three lemmata.

\begin{defn}
	Let $g:\mathbb{R}^{2N}\to\mathbb{R}$ be a smooth function. We say that $g$ has order $k$ at a point $(p,q)$ (w.r.t\ system~\eqref{eq: main oscillators system with friction}) and write $\ord(g)(p,q)=k$ if $L_F^j g(p,q)=0$ for all $0\le j\le k-1$ and $L_F^{k}g(p,q)\ne 0$. Sometimes we omit $(p,q)$ and write $\ord(g)$ for brevity.
	If $L^j_F g(p,q)=0$ for all $j$, we write $\ord(g)(p,q)=\infty.$
\end{defn}

\begin{lem}
	\begin{itemize}
		\item if $\ord(g)>0$, then $\ord(L_F g)=\ord(g)-1$;
		\item $\ord(g_1g_2)=\ord(g_1)+\ord(g_2)  $;
		\item $\ord(g_1+g_2)\ge\min\{\ord(g_1),\ord(g_2)\}$;
		\item if $\ord(f)\ne\ord(g)$, then $\ord(f+g)=\min\{\ord(f),\ord(g)\}$.
	\end{itemize}
	
\end{lem}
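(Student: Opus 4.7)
The four properties all follow mechanically from the definition of $\ord$ together with the linearity of $L_F$ and the Leibniz rule $L_F(fg)=(L_Ff)g+f(L_Fg)$, and I would present them in exactly the stated order.

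For the first bullet, I simply observe the identity $L_F^j(L_Fg)=L_F^{j+1}g$. If $\ord(g)=k\ge 1$, then $L_F^{j+1}g=0$ for $0\le j\le k-2$ while $L_F^k g\ne 0$, so $L_F^{k-1}(L_Fg)\ne 0$ and $\ord(L_Fg)=k-1$. For the third bullet, I use linearity: $L_F^n(g_1+g_2)=L_F^ng_1+L_F^ng_2$ vanishes whenever $n<\min\{\ord(g_1),\ord(g_2)\}$, so $\ord(g_1+g_2)\ge\min\{\ord(g_1),\ord(g_2)\}$. For the fourth bullet, suppose without loss of generality $\ord(f)=k<\ord(g)$. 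Then $L_F^jf=0=L_F^jg$ for $j<k$, but $L_F^k(f+g)=L_F^kf+L_F^kg=L_F^kf\ne 0$ since $L_F^kg=0$; hence $\ord(f+g)=k$.

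The main (and only mildly nontrivial) item is the second bullet. Here I would apply the generalized Leibniz formula
\begin{equation*}
L_F^n(g_1g_2)=\sum_{j=0}^n\binom{n}{j}L_F^jg_1\cdot L_F^{n-j}g_2,
\end{equation*}
which is established by a routine induction on $n$ from the one-step Leibniz rule. Write $k_i:=\ord(g_i)$. Every summand with $j<k_1$ or $n-j<k_2$ vanishes, so for $n<k_1+k_2$ the whole sum is zero, while for $n=k_1+k_2$ only the term with $j=k_1$ survives, giving
\begin{equation*}
L_F^{k_1+k_2}(g_1g_2)=\binom{k_1+k_2}{k_1}L_F^{k_1}g_1\cdot L_F^{k_2}g_2\ne 0,
\end{equation*}
the non-vanishing being exactly the definition of $\ord(g_i)=k_i$. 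This yields $\ord(g_1g_2)=k_1+k_2$.

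Finally I would briefly comment on the edge cases involving $\ord=\infty$: if $\ord(g)=\infty$ then $L_Fg$ vanishes to all orders and the first identity reads $\infty-1=\infty$; in the Leibniz identity an infinite order on one factor forces the whole sum to vanish, so the product rule reads $\infty+k=\infty$; the sum statements remain true with the convention $\min\{k,\infty\}=k$ and $\min\{\infty,\infty\}=\infty$. I do not anticipate any real obstacle here; the entire lemma is a bookkeeping exercise with the Leibniz formula, and the only point demanding slight care is tracking the single surviving binomial coefficient in the product case to confirm strict equality rather than just an inequality.
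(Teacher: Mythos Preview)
Your proof is correct and is precisely the unpacking of what the paper means by its one-line justification ``Follows from the fact that $L_F$ is a first order differential operator'': you use linearity for the sum statements and the Leibniz rule (and its iterated form) for the product, which is exactly the content of being a first-order differential operator. The paper gives no further detail, so your argument is simply a more explicit rendering of the same approach.
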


\begin{proof}
	Follows from the fact that $L_F$ is a first order differential operator.
\end{proof}

\begin{lem}
	\label{lm: orders}
	
	If $\ord(H)\ge 3\cdot 2^{N+1}-5$ at a point $(p,q)$, then $p=0$ and $q$ satisfies system~\eqref{eq: main equilibrium system}.
	
\end{lem}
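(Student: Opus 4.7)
The plan is to cascade through the chain: use the equations of motion to bound $\ord(p_k)$ inductively for $k=1,\dots,N$, and then read off the equilibrium system from the vanishing of $L_F p_k$.

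First I would convert the hypothesis on $\ord(H)$ into a lower bound on $\ord(p_1)$. Since $L_F H=-p_1^2$, the product rule (second bullet of the preceding order lemma) gives $\ord(L_F H)=2\ord(p_1)$, and combined with the first bullet the smallest $j\ge 1$ with $L_F^j H\ne 0$ equals $2\ord(p_1)+1$. Hence $\ord(H)\ge 3\cdot 2^{N+1}-5$ forces $\ord(p_1)\ge 3\cdot 2^{N}-3=:\mu_1$.

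Next I would prove by induction on $k$ that $\ord(p_k)\ge\mu_k:=3\cdot 2^{N-k+1}-3$. Differentiating the $k$-th equation of motion twice yields
\[
  L_F^2 p_k = V_k''\, p_{k+1} + R_k,
\]
where $R_k = (U_k''+V_{k-1}''+V_k'')\,p_k - V_{k-1}''\,p_{k-1}$ for $k\ge 2$, and $R_1=(U_1''+V_1'')\,p_1 - L_F p_1$. The inductive hypothesis together with the recursion $\mu_{k-1}=2\mu_k+3$ (and $\ord(L_F p_1)=\mu_1-1$ for the $k=1$ step) gives $\ord(R_k)\ge\mu_k-2$, and $\ord(L_F^2 p_k)\ge \mu_k-2$, so that $\ord(V_k''\,p_{k+1})\ge\mu_k-2$, i.e.\ $\ord(V_k'')+\ord(p_{k+1})\ge\mu_k-2$.

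The crux is bounding $\ord(V_k'')$. If $V_k''(q_k-q_{k+1})\ne 0$ at the point, then $\ord(V_k'')=0$ and already $\ord(p_{k+1})\ge\mu_k-2\ge\mu_{k+1}$. Otherwise the non-degeneracy hypothesis forces $V_k'''\ne 0$ there, so $V_k''$ has a simple zero in $\zeta=q_k-q_{k+1}$; writing $V_k''(\zeta)=(\zeta-\zeta_0)g(\zeta)$ with $g(\zeta_0)\ne 0$ one gets $\ord(V_k'')=\ord(\zeta-\zeta_0)=1+\ord(p_k-p_{k+1})$ via the first bullet. If $\ord(p_{k+1})<\mu_k\le\ord(p_k)$, the fourth bullet gives $\ord(p_k-p_{k+1})=\ord(p_{k+1})$, and substitution yields $1+2\ord(p_{k+1})\ge\mu_k-2$, i.e.\ $\ord(p_{k+1})\ge(\mu_k-3)/2=\mu_{k+1}$; the remaining case $\ord(p_{k+1})\ge\mu_k\ge\mu_{k+1}$ is automatic.

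Since $\mu_N=3\ge 2$, the induction gives $p_k=0$ and $L_F p_k=0$ at $(p,q)$ for every $k$; the first is $p=0$, and the second, expanded via $L_F p_k=-\delta_{k1}p_1-U_k'+V_{k-1}'-V_k'$ together with $p_1=0$, is precisely~\eqref{eq: main equilibrium system}. The main obstacle I anticipate is the bookkeeping in the case $V_k''=0$: correctly identifying $\ord(V_k'')=1+\ord(p_k-p_{k+1})$ from the simple zero of $V_k''$ together with the Lie derivative calculus, and checking that the resulting recursion $\mu_{k+1}=(\mu_k-3)/2$ is exactly what converts the starting bound $\mu_1=3\cdot 2^N-3$ into $\mu_N\ge 2$, matching the announced constant $3\cdot 2^{N+1}-5$ on $\ord(H)$.
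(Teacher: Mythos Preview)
Your proposal is correct and follows essentially the same route as the paper's proof: convert the bound on $\ord(H)$ into $\ord(p_1)\ge 3(2^N-1)$ via $L_FH=-p_1^2$, then cascade down the chain using the formula for $L_F^2p_k$ and the dichotomy $V_k''\ne 0$ versus $V_k''=0,\,V_k'''\ne 0$ to obtain the recursion $\ord(p_{k+1})\ge\frac12(\ord(p_k)-3)$, yielding $\ord(p_k)\ge 3(2^{N-k+1}-1)$ and hence $p_k=L_Fp_k=0$ for all $k$. The only notable difference is that you explicitly keep the friction contribution $-L_Fp_1$ in $R_1$, whereas the paper's displayed formula for $L_Fp_1$ drops the $-p_1$ term; your version is the cleaner bookkeeping, and the sign slips in your $R_k$ formulas are harmless for the order argument.
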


\begin{proof}
	We again begin with the identity
	\[
	L_F^{k+1} H = -\sum_{j=0}^k \binom{k}{j} L_F^jp_1 L_F^{k-j}p_1,
	\]
	which implies
	\[
	\ord(p_1)= \frac12  \ord(L_F H)   \ge 3(2^N-1).
	\]
	Let us compute orders of $p_k$ for $k\ge 2$. For $p_1$ we obtain
	\[
	L_F p_1 = -U_1'-V_1'=0
	\quad\text{and}\quad
	L_F^2 p_1 = -(U_1''+V_1'')p_1 + V_1''p_2.
	\]	
	Since $\ord(p_1) = \ord(L_F^2p_1)+2$, we obtain
	\[
	\ord(V_1''p_2) = \ord(L_F^2 p_1 + (U_1''+V_1'')p_1) = \ord(L_F^2p_1).
	\]
	Hence $\ord(p_2)\le \ord(p_1)-2$. If $V_1''\ne 0$, then $\ord(p_2)=\ord(p_1)-2$. On the other case $V_1''=0$, we have $V_1'''\ne 0$ by the assumption of the theorem and $\ord(p_1)-2=\ord(p_2)+\ord(V_1'')$. Since $L_FV_1''=V_1'''(p_1-p_2)$, we have $\ord(V_1'')=\ord(p_2)+1$. So $\ord(p_1)-2=2\ord(p_2)+1$ and $\ord(p_2)=\frac12(\ord(p_1)-3)$. Now we unite the both cases. Since $\frac12(j-3)\le j-2$ for any $j\ge 1$ and $\ord(p_1)\ge 1$, we obtain
	\[
	\ord(p_1)-2\ge \ord(p_2)\ge \frac12(\ord(p_1)-3)\ge 3(2^{N-1}-1).
	\]
	
	Similarly,
	\[
	L_F p_2 = -U_2'+V_1'-V_2'
	\quad\text{and}\quad
	L_F^2 p_2 = V_1''p_1-(U_2''+V_2''+V_1'')p_2 + V_2''p_3
	\]
	and
	\[
	\ord(p_2)-2\ge\ord(p_3)\ge 3(2^{N-2}-1).
	\]
	
	Proceeding by induction, we obtain $\ord(p_k)\ge 3(2^{N-k+1}-1)$. In particular, since $k\le N$, we have $\ord(p_k)\ge 3$. Hence $p_k=L_F p_k=0$ for all $k$. Therefore, $p=0$ and $q$ is a solution to system~\eqref{eq: main equilibrium system}.
\end{proof}

\begin{lem}
	\label{lm: set of zeros is bounded}	
	Set of solutions to system~\eqref{eq: main equilibrium system} is bounded. 
	
\end{lem}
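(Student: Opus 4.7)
The plan is to show that the index $k$ realizing $\max_j q_j$ (respectively $\min_j q_j$) must satisfy $U_k'(q_k)\le C$ (resp.\ $U_k'(q_k)\ge -C$) for a constant $C$ depending only on the potentials, and then to invoke the coercivity hypothesis $U_k'(x)\to\pm\infty$.

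Let $k$ be an index with $q_k=\max_{1\le j\le N}q_j$. Then $q_{k-1}-q_k\le 0$ and $q_k-q_{k+1}\ge 0$. Since $V_j''(x)>0$ for $|x|\ge R$, the function $V_j'$ is monotone increasing on each of $(-\infty,-R]$ and $[R,\infty)$, so together with continuity on $[-R,R]$ the quantities
\[
M_+ := \max_{1\le j\le N-1}\,\sup_{x\le 0}V_j'(x), \qquad M_- := \min_{1\le j\le N-1}\,\inf_{x\ge 0}V_j'(x)
\]
are finite; indeed the sup reduces to $\max_{[-R,0]}V_j'$ and the inf to $\min_{[0,R]}V_j'$. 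With the convention $V_0'\equiv V_N'\equiv 0$, the $k$-th equation of \eqref{eq: main equilibrium system} reads $U_k'(q_k)=V_{k-1}'(q_{k-1}-q_k)-V_k'(q_k-q_{k+1})$, and the sign constraints on the arguments of the two $V'$ terms force
\[
U_k'(q_k)\le C, \qquad C:=\max\bigl(M_+-M_-,\,M_+,\,-M_-\bigr),
\]
uniformly over $k\in\{1,\dots,N\}$; the three alternatives in the definition of $C$ correspond to the generic case and to $k=N$ and $k=1$, where one of the two $V'$ contributions is simply absent.

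The hypothesis $U_k'(x)\to+\infty$ as $x\to+\infty$ now yields for every $k$ a finite $a_k$ such that $U_k'(x)>C$ whenever $x>a_k$; setting $a:=\max_k a_k$ gives $\max_j q_j\le a$. A symmetric argument applied to the index achieving $\min_j q_j$, using monotonicity of $V_j'$ on the opposite half-lines together with $U_k'(x)\to-\infty$ as $x\to-\infty$, produces a finite $b$ with $\min_j q_j\ge -b$. Hence every solution lies in the box $[-b,a]^N$, which moreover identifies explicitly the bounded set $K$ promised after the theorem statement. The only genuinely nontrivial point, and the main (quite mild) obstacle, is the finiteness of $M_\pm$: this rests squarely on continuity of $V_j'$ on the compact set $[-R,R]$ together with the monotonicity of $V_j'$ outside it, which together ensure that the relevant one-sided suprema and infima are attained on $[-R,R]$ and are therefore finite.
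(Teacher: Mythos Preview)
Your proof is correct and is in fact cleaner than the paper's. Both arguments exploit the same two ingredients --- the one-sided boundedness of $V_j'$ coming from $V_j''>0$ outside $[-R,R]$, and the coercivity $U_k'(x)\to\pm\infty$ --- but they organize them differently. The paper argues by contradiction and propagation: assuming a smallest index $k$ with $q_k\ge a$, it shows from the $k$-th equation that $q_{k+1}>q_k$, then inductively pushes this inequality down the chain until the last equation $U_N'-V_{N-1}'=0$ is violated. You instead go straight to the index $k$ realizing $\max_j q_j$, where the sign constraints $q_{k-1}-q_k\le 0$ and $q_k-q_{k+1}\ge 0$ are automatic, and read off the bound $U_k'(q_k)\le C$ in one line. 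Your route is a maximum-principle argument in disguise and avoids the induction entirely; the paper's route, while longer, yields the slightly sharper qualitative picture that once one coordinate exceeds the threshold all later ones do too. Both lead to the same box $[-b,a]^N$ and hence the same description of $K$.
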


\begin{proof}  

		Let
	\[
	M:= \max_{|x|\le R}\max_{1\le k \le N} |V'_k(x)| + 1 \ge 1.
	\]
	Since $ V_k''(x) > 0$ for $x\le -R$, 
	\be\lbl{x<R}
	x\le R \qu\Rightarrow \qu V_k'(x) \le M - 1 \qu\forall k.
	\ee
	Similarly, since $ V_k''(x) > 0$ for $x\ge R$,
	\be\lbl{x>-R}
	x \ge -R \qu\Rightarrow \qu V_k'(x) \ge -M + 1 \qu\forall k.
	\ee
	Since $U_k'(x)\to\infty$ as $x\to \infty$, there is $a> R$ such that
	\be\lbl{a_VU}
	x \ge a \qu \Rightarrow \qu U'_k(x) \ge 2M\qu \forall k.
	\ee
We claim that any solution $q$ to system \eqref{eq: main equilibrium system} satisfies $q_k < a$ for all $k$.
Indeed, let $k$ be the smallest index for which $q_k \ge a$, so that in particular $q_{k-1}<q_k$ (if $k\ne 1$).
Then, 
$
U_k'(q_k) \ge 2M
$
due to \eqref{a_VU},
and  $V'_{k-1}(q_{k-1} - q_k) \le M - 1$, due to \eqref{x<R}, where we set $V'_{0}:=0$.
Then,  
\[
U_k'(q_k) - V'_{k-1}(q_{k-1} - q_k) \ge M + 1.
\]
In view of the $k$-th equation from~\eqref{eq: main equilibrium system},  this implies
$V'_{k}(q_{k} - q_{k+1}) \le - M - 1$.
Accordingly, by \eqref{x>-R}, $q_k - q_{k+1} < -R$, so $q_{k+1} > q_k \ge a$.
Continuing by induction, $N-1$-st equation in~\eqref{eq: main equilibrium system} gives $q_N > q_{N-1} \ge a$, so  
\[U'_N(q_N) - V_{N-1}'(q_{N-1} - q_N) \ge 2M - (M-1) \ge 2\] 
which contradicts to the last equation in~\eqref{eq: main equilibrium system}.

Similarly we show that every solution $q$ is bounded from below by $-b<-R$ such that 
\be\lbl{b_VU}
x \le -b \qu \Rightarrow \qu U'_k(x) \le -2M\qu \forall k.
\ee

\end{proof}

\begin{proof}[Proof of Theorem~\ref{thm: oscillators complicated}]
	
If  $L_F^jH(p,q)=0$ for all $1\le j\le 3\cdot 2^{N+1}-5$ at a point $(p,q)$, then $p=0$ and $q$ satisfies~\eqref{eq: main equilibrium system} by  Lemma~\ref{lm: orders}. Hence $q$ is bounded by Lemma~\ref{lm: set of zeros is bounded}.
	
\end{proof}

\section{Mixing in chains coupled to Langevin thermostats}
\lbl{s:mixing}

Let us consider either the chain of rotators with Hamiltonian \eqref{H_rot} or the chain of  oscillators with Hamiltonian~\eqref{H_osc}.
 We couple the first and the last particles in the chain with Langevin thermostats of positive temperatures $T_1$ and $T_N$ correspondingly. That is, we consider the system of stochastic differential equations
\be\lbl{eq_stoch}
\dot q_j = p_j, \qquad \dot p_j = -\p_{q_j} H +(\de_{1j} + \de_{Nj}) (- p_j + \sqrt{2T_j}\dot\beta_j(t)), \qu 1\le j \le N,  
\ee
where $\beta_j(t)$ are standard independent Brownian motions. The initial conditions $(p_0,q_0)$ are assumed to be random and independent from the Brownian motions $\beta_j$.

Denote by $\cM\ni(p,q)$ the phase space of the system, i.e. $\cM = \R^N\times \T^N$ in the case of rotators and $\cM = \R^{2N}$ in the case of oscillators. Let $\mu$ be a probability measure on $\cM$ and
$(p,q)(t)$ be a solution to \eqref{eq_stoch}  with initial conditions $(p_0,q_0)$, distributed accordingly to the measure $\mu$. 
If the distribution of solution at time $t$, denoted by $\cD(p,q)(t)$, satisfies 
\[
\cD(p,q)(t) \equiv \mu \qmb{for all}\qu t\ge 0, 
\]
then the measure $\mu$ is called  {\it stationary}.
If equation \eqref{eq_stoch} has a unique stationary measure $\mu$ and distribution $\cD(p,q)(t)$ of {\it any}
\footnote{With "reasonable"\ initial conditions, e.g. with finite second moment.}
 its solution $(p,q)(t)$ weakly converges to $\mu$ as $t\to\infty$, then  the equation is called {\it mixing}.

Establishing the mixing property for equation \eqref{eq_stoch} is a problem of prime interest in non-equilibrium statistical mechanics of solids. 
Let us first consider the case of oscillators and assume that the potentials $V_j$ and $U_j$ from \eqref{H_osc}  behave at infinity polynomially. Then, due to our knowledge,  for chains of arbitrary length $N$ the mixing property is proven only in the case when the interaction potentials $V_j$ are "not weaker" than the pinning potentials $U_j$, i.e. $V_j$ behave at infinity as  polynomials of the same or higher degree than those associated with the potentials $U_j$ \cite{EPRB, EH, RBT, C, CEHRB}. In the opposite case when the pinning dominates, the mixing is proven only for a chain of $N=3$ oscillators \cite{HM}.

As for the chain of rotators,
in \cite{CEP_3, CE_4} the authors argue that it can be viewed as a system of oscillators with extremely strong pinning potentials. They establish the mixing property for the chains of length $N=3,4$  while for longer chains the problem remains open.
The reason why it is so hard to prove the  mixing in the chains of rotators or oscillators with "strong" pinning, is in the discussed in the Introduction decoupling of "fast" rotators (or oscillators) from their "slow" neighbours, leading to extremely slow energy transport through the chain. 
In particular, as a result the rate of mixing  turns our to be slower than exponential, in contrast to the case of oscillators with "strong" interaction, see \cite{CP} additionally to the above mentioned works.

Let us note that for chains  in which {\it each} particle interacts with its own Langevin thermostat,  the mixing property is well-understood.

One of the most effective approaches to problems of this kind relies on the fact that to establish the mixing property it suffices to construct a strict Lyapunov function $W:\cM\mapsto\R$ outside a compact set for the generator  $\Lc^{T_1,T_2}$ of equation  \eqref{eq_stoch}. More specifically, it suffices to find a sufficiently smooth proper function $W$, satisfying 
\[
\Lc^{T_1,T_2} W \le -\psi\circ W + C,
\] 
where $C$ is a constant and $\psi$ is a positive increasing function,  see for details Theorem~2.1 in \cite{CEP_3}   and references therein.
We recall that action of the generator $\Lc^{T_1,T_2}$ on a $C^2$-smooth function $f(p,q)$ can be computed as
\[
\Lc^{T_1,T_2} f = L_F f + T_1\frac{\p^2 f}{\p p_1^2} + T_N\frac{\p^2 f}{\p p_N^2},
\]
where $L_F$ stands for the Lie derivative along the vector field given by the deterministic part of equation \eqref{eq_stoch}.

 For the chain of oscillators with "strong" interaction which dominates the pinning,  the mixing in \cite{C, CEHRB} is established by proving that $\exp(\theta H)$ gives the desired Lyapunov function once $\theta$ is small enough. In the opposite case as well as in the case of rotators the employed arguments do not work. Instead, in \cite{CEP_3, CE_4} the authors construct a suitable Lyapunov functions for the chains of rotators of lengths $N=3$ and $4$ using a method related to the discussed in the Introduction KAM-like approach subsequently employed in \cite{CEW}. For the moment of writing it did not allow to prove the mixing property for longer chains since for them it is developed only for specific regime when the  most part of energy is concentrated in a single rotator. 

Now let us consider equation similar to \eqref{eq_stoch}, in which the Langevin thermostat is coupled only to the first oscillator, i.e. in \eqref{eq_stoch} the factor $\de_{1j}+\de_{Nj}$ is replaced by $\de_{1j}$. Its generator $\Lc^{T_1}$ acts to a $C^2$-smooth function $f(p,q)$ as
$
\ds{\Lc^{T_1} f = L_F f + T_1\frac{\p^2 f}{\p p_1^2}}.
$
 Then, being applied to the Lyapunov function $W$ constructed in the Theorem~\ref{t:rotators}, it satisfies
\be\lbl{Gen_W}
\Lc^{T_1} W \le -H + C + T_1\frac{\p^2 W}{\p p_1^2}.
\ee
By definition \eqref{LF_rot} of $W$, 
\[\frac{\p^2 W}{\p p_1^2}\sim a_0\ga_0H^{\ga_0 - 1} = a_0\ga_0 H^{2N-2} 
\qmb{for}\qu H\gg 1.
\]
 Then, already for $N=2$ the r.h.s. of \eqref{Gen_W} growth as $H^2$, so $W$ is {\it not} a Lyapunov function for the generator $\Lc^{T_1}$.
Similarly one can see 
that our approach does not allow to prove mixing for chain \eqref{eq_stoch} even for $N=3$, in contrast to  the method suggested in \cite{CEP_3, CE_4, CEW}.
However, our construction is very explicit, simple and universal, and it allows to get the rate of energy dissipation seemingly achieved in the worst scenario.
We hope that its suitable modification could help to construct the desired Lyapunov functions for generators of systems \eqref{eq_stoch}.

\medskip

\noindent{\bf Funding.}     
This work was supported by
the Russian Science Foundation under grant no. 19-71-30012, https://rscf.ru/en/project/23-71-33002/. 

\end{document}